 \newtheoremstyle{mytheorem}
 {3pt}
 {3pt}
 {\slshape}
 {}
 {\bfseries}
 {.}
 { }
 {}
\numberwithin{equation}{section}
\theoremstyle{theorem}
\newtheorem{theorem}{Theorem}[section]
\newtheorem*{theorem*}{Theorem}
\newtheorem{lemma}[theorem]{Lemma}
\providecommand{\customgenericname}{}
\newcommand{\newcustomtheorem}[2]{%
	\newenvironment{#1}[1]
	{%
		\renewcommand\customgenericname{#2}%
		\renewcommand\theinnercustomgeneric{##1}%
		\innercustomgeneric
	}
	{\endinnercustomgeneric}
}
\theoremstyle{definition}
\newtheorem*{example*}{Example}
\newtheorem*{examples*}{Examples}
\newtheorem*{remark*}{Remark}
\newtheorem*{remarks*}{Remarks}
\newtheoremstyle{named}{}{}{\itshape}{}{\bfseries}{.}{.5em}{#1\thmnote{ #3}}
\theoremstyle{named}
\newcommand{\Keywords}[1]{\ifthenelse{\isempty{#1}}{}{\smallskip \smallskip \noindent \textbf{Keywords}. #1}}
\newcommand{\MSC}[2][2020]{\ifthenelse{\isempty{#2}}{}{\smallskip \smallskip \noindent \textbf{#1MSC}. #2}}
\newcommand{\abstractnote}[1]{\ifthenelse{\isempty{#1}}{}{\smallskip \smallskip \noindent \textsuperscript{\dag}#1}}
\def\specialsection{\@startsection{section}{1}%
  \z@{\linespacing\@plus\linespacing}{.5\linespacing}%
  {\normalfont}}
\def\section{\@startsection{section}{1}%
  \z@{.7\linespacing\@plus\linespacing}{.5\linespacing}%
  {\normalfont\scshape}}
\patchcmd{\@settitle}{\uppercasenonmath\@title}{\Large\boldmath}{}{}
\patchcmd{\@settitle}{\begin{center}}{\begin{flushleft}}{}{}
\patchcmd{\@settitle}{\end{center}}{\end{flushleft}}{}{}
\patchcmd{\@setauthors}{\MakeUppercase}{\normalsize}{}{}
\patchcmd{\@setauthors}{\centering}{\raggedright}{}{}
\patchcmd{\section}{\scshape}{\large\bfseries\boldmath}{}{}
\patchcmd{\subsection}{\bfseries}{\bfseries\boldmath}{}{}
\renewcommand{\@secnumfont}{\bfseries}
\patchcmd{\@startsection}{\@afterindenttrue}{\@afterindentfalse}{}{}
\patchcmd{\abstract}{\leftmargin3pc}{\leftmargin1pc}{}{}
\def\maketitle{\par
  \@topnum\z@ 
  \@setcopyright
  \thispagestyle{empty}
  \ifx\@empty\shortauthors \let\shortauthors\shorttitle
  \else \andify\shortauthors
  \fi
  \@maketitle@hook
  \begingroup
  \@maketitle
  \toks@\@xp{\shortauthors}\@temptokena\@xp{\shorttitle}%
  \toks4{\def\\{ \ignorespaces}}
  \edef\@tempa{%
    \@nx\markboth{\the\toks4
      \@nx\MakeUppercase{\the\toks@}}{\the\@temptokena}}%
  \@tempa
  \endgroup
  \c@footnote\z@
  \@cleartopmattertags
}
\newcommand{\cJ}{\mathcal{J}}
\newcommand{\cP}{\mathcal{P}}
\newcommand{\qbinom}[2]{{#1\brack #2}}
\newdimen\plusheight
\def\+{\;\lower\plusheight\hbox{$+$}\;}
\newdimen\minusheight
\def\-{\;\lower\minusheight\hbox{$-$}\;}
\newdimen\cdotsheight
\def\cds{\lower\cdotsheight\hbox{$\cdots$}}
\newcommand{\lastddots}{%
	\raisebox{\dimexpr1ex-\height}{%
		$\displaystyle
		\raisebox{.5\height}{$\ddots$}
		$%
	}%
}
\title[Hankel determinants for $q$-Euler numbers]{Hankel determinants and Jacobi continued fractions for $q$-Euler numbers}
\author[S. Chern]{Shane Chern}
\address[S. Chern]{Department of Mathematics and Statistics, Dalhousie University, Halifax, Nova Scotia, B3H 4R2, Canada}
\email{chenxiaohang92@gmail.com}
\author[L. Jiu]{Lin Jiu*}\thanks{*corresponding author}
\address[L. Jiu]{Zu Chongzhi Center for Mathematics and Computational Sciences, Duke Kunshan University, Kunshan, Suzhou, Jiangsu Province, 215316, PR China}
\email{lin.jiu@dukekunshan.edu.cn}
\date{}
\begin{document}

\maketitle

\begin{abstract}

The $q$-analogs of Bernoulli and Euler numbers were introduced by Carlitz. Similar to the recent results on the Hankel determinants for the $q$-Bernoulli numbers established by Chapoton and Zeng, we determine parallel evaluations for the $q$-Euler numbers. It is shown that the associated Favard-type orthogonal polynomials for $q$-Euler numbers are given by a specialization of the big $q$-Jacobi polynomials, thereby leading to their corresponding Jacobi continued fraction expression, which eventually serves as a key to our determinant evaluations.

\Keywords{Hankel determinants, Jacobi continued fractions, big $q$-Jacobi polynomials, $q$-Euler numbers.}

\MSC{11B68, 11C20, 30B70, 33D45.}
\end{abstract}

\section{Introduction}

A \emph{Hankel matrix} $(M_{i,j})$ is a square matrix with constant skew diagonals, i.e.~$M_{i,j}=M_{i',j'}$ whenever $i+j=i'+j'$. This terminology was introduced by Hermann Hankel, and in recent years exhibits substantial utility in data analysis, ranging from geophysics \cite{GS1977} to signal processing \cite{Str1997}. Letting $\{s_n\}_{n\ge 0}$ be a sequence in a field $\mathbb{K}$, one may define its associated Hankel matrices by $(s_{i+j})_{0\le i,j\le n}$. In the meantime, it is often of significance to evaluate the determinant of these matrices. Such determinants
\begin{align*}
	\underset{{0\le i,j\le n}}{\det} (s_{i+j})=\det\begin{pmatrix}
		s_0 & s_1 & s_2 & \cdots & s_n\\
		s_1 & s_2 & s_3 & \cdots & s_{n+1}\\
		s_2 & s_3 & s_4 & \cdots & s_{n+2}\\
		\vdots & \vdots & \vdots & \ddots & \vdots\\
		s_n & s_{n+1} & s_{n+2} & \cdots & s_{2n}
	\end{pmatrix}
\end{align*}
are usually called the \emph{Hankel determinants} for $\{s_n\}_{n\ge 0}$.

From a number-theoretic perspective, it is a natural game to take the sequence $s_n$ as classic ones of arithmetic meaning. As an example, considering the \emph{Bernoulli numbers} $B_n$ defined by
\begin{align*}
	\sum_{n\ge 0} B_n\frac{t^n}{n!}= \frac{t}{e^t - 1},
\end{align*}
it was shown by Al-Salam and Carlitz \cite[p.~93, eq.~(3.1)]{AC1959} that
\begin{align*}
	\underset{{0\le i,j\le n}}{\det}(B_{i+j})= (-1)^{\binom{n+1}{2}} \prod_{k=1}^n \frac{(k!)^6}{(2k)!(2k+1)!}.
\end{align*}
Another example treats the \emph{Euler numbers} $E_n$ given by
\begin{align*}
	\sum_{n\ge 0} E_n\frac{t^n}{n!} = \frac{2}{e^t + e^{-t}}.
\end{align*}
Al-Salam and Carlitz \cite[p.~93, eq.~(4.2)]{AC1959} also proved that
\begin{align*}
	\underset{{0\le i,j\le n}}{\det}(E_{i+j})= (-1)^{\binom{n+1}{2}} \prod_{k=1}^n (k!)^2.
\end{align*}
More such Hankel determinant evaluations were nicely collected by Krattenthaler in his seminal surveys \cite[Section 2.7]{Kra1999} and \cite[Section 5.4]{Kra2005}.

There are many ways to offer number sequences a generalization, among which polynomialization is one of the simplest approaches. Simply speaking, one may construct a sequence of polynomials so that it reduces to our original number sequence when the argument is specifically chosen. Along this line, we can define \emph{Bernoulli polynomials} $B_n(x)$ and \emph{Euler polynomials} $E_n(x)$ by
\begin{align*}
	\sum_{n\ge 0} B_n(x)\frac{t^n}{n!} &= \frac{te^{xt}}{e^t - 1},\\
	\sum_{n\ge 0} E_n(x)\frac{t^n}{n!} &= \frac{2e^{xt}}{e^t + 1}.
\end{align*}
Then $B_n = B_n(0)$ and $E_n = 2^n E_n(\frac{1}{2})$. However, the Hankel determinant evaluations for $B_n(x)$ and $E_n(x)$ are not of much excitement as
\begin{align*}
	B_n(x) &= \sum_{k=0}^n \binom{n}{k}B_k x^{n-k},\\
	E_n(x) &= \sum_{k=0}^n \binom{n}{k}\frac{E_k}{2^k} x^{n-k}.
\end{align*}
It is, on the other hand, a standard result \cite[p.~419, Item 445]{MM1960} that
\begin{align*}
	\underset{{0\le i,j\le n}}{\det} (s_{i+j}) = \underset{{0\le i,j\le n}}{\det} \big(s_{i+j}(x)\big),
\end{align*}
where
\begin{align*}
	s_n(x) = \sum_{k=0}^n \binom{n}{k}s_k x^{n-k}.
\end{align*}
Thus,
\begin{align}
	\underset{{0\le i,j\le n}}{\det}\big(B_{i+j}(x)\big) &= (-1)^{\binom{n+1}{2}} \prod_{k=1}^n \frac{(k!)^6}{(2k)!(2k+1)!},\\
	\underset{{0\le i,j\le n}}{\det}\big(E_{i+j}(x)\big) &= (-\tfrac{1}{4})^{\binom{n+1}{2}} \prod_{k=1}^n (k!)^2.\label{eq:Euler-poly-det}
\end{align}
(See, e.g., \cite[p.~94, eqs.~(5.1) and (5.2)]{AC1959}.) Although the above evaluations are independent of the variable $x$, the story becomes different when subsequences of Bernoulli or Euler polynomials are taken into account. As shown in a recent paper of Dilcher and Jiu \cite[Theorem 1.1 and Corollary 5.2]{DJ2021},
\begin{align*}
	\underset{{0\le i,j\le n}}{\det}\big(B_{2(i+j)+1}(\tfrac{x+1}{2})\big) &= (-1)^{\binom{n+1}{2}} \left(\frac{x}{2}\right)^{n+1} \prod_{k=1}^n \left(\frac{k^4 (x^2 - k^2)}{4(2k-1)(2k+1)}\right)^{n-k+1},\\
	\underset{{0\le i,j\le n}}{\det}\big(E_{2(i+j)+1}(\tfrac{x+1}{2})\big) &= (-1)^{\binom{n+1}{2}} \left(\frac{x}{2}\right)^{n+1} \prod_{k=1}^n \left(\frac{k^2 (x^2 - 4k^2)}{4}\right)^{n-k+1}.
\end{align*}
In addition, results of like nature can be found in \cite{DJ2022}.

For other generalizations of number sequences, the $q$-version is sometimes of more ``$q$-riosity.'' Usually, we have a sequence of rational functions in $q$, which yields our original sequence at the limiting case $q\to 1$. Throughout, we introduce the \emph{$q$-integers} for $m\in\mathbb{Z}$,
\begin{align*}
	[m]_q:= \frac{1-q^m}{1-q},
\end{align*}
and the \emph{$q$-factorials} for $M\in\mathbb{N}$,
\begin{align*}
	[M]_q! := \prod_{m=1}^M [m]_q.
\end{align*}
We also require the \emph{$q$-Pochhammer symbols} for $N\in\mathbb{N}\cup\{\infty\}$,
\begin{align*}
	(A;q)_N:=\prod_{k=0}^{N-1} (1-A q^k),
\end{align*}
and
\begin{align*}
	(A,B,\ldots,C;q)_N := (A;q)_N(B;q)_N\cdots (C;q)_N.
\end{align*}
The $q$-analogs of Bernoulli and Euler numbers were introduced by Carlitz \cite{Car1948}:
\begin{align}
	\beta_n &:= \frac{1}{(1-q)^n}\sum_{k=0}^n (-1)^k \binom{n}{k} \frac{k+1}{[k+1]_q},\\
	\epsilon_n &:=\frac{1}{(1-q)^n}\sum_{k=0}^n (-1)^k \binom{n}{k} \frac{1+q}{1+q^{k+1}}.
\end{align}
Alternatively, Carlitz's \emph{$q$-Bernoulli numbers} $\beta_n$ can be recursively defined by $\beta_0=1$ and for $n\ge 1$,
\begin{align}
	\sum_{k=0}^n \binom{n}{k} q^{k+1} \beta_k -\beta_n = \begin{cases}
		1, & \text{if $n=1$};\\
		0, & \text{if $n\ge 2$},
	\end{cases}
\end{align}
while the \emph{$q$-Euler numbers} are given by $\epsilon_0=1$ and recursively for $n\ge 1$,
\begin{align}\label{eq:q-Euler-rec}
	\sum_{k=0}^n \binom{n}{k} q^{k+1} \epsilon_k +\epsilon_n = 0.
\end{align}
It should be remarked that at the $q\to 1$ limit, $\epsilon_n$ reduces to
\begin{align*}
	1, -\tfrac{1}{2}, 0, \tfrac{1}{4}, 0, -\tfrac{1}{2}, 0, \tfrac{17}{8}, 0, -\tfrac{31}{2}, \ldots,
\end{align*}
which is identical to $E_n(0)$ rather than $2^n E_n(\frac{1}{2})$, the Euler numbers $E_n$.

In regard to the Hankel determinants, surprisingly, we still have neat evaluations for the $q$-Bernoulli numbers. It was obtained by Chapoton and Zeng \cite[p.~359, eq.~(4.7)]{CZ2017} that
\begin{align*}
	\underset{{0\le i,j\le n}}{\det}(\beta_{i+j})= (-1)^{\binom{n+1}{2}} q^{\binom{n+1}{3}} \prod_{k=1}^n \frac{([k]_q!)^6}{[2k]_q![2k+1]_q!}.
\end{align*}
Meanwhile, Chapoton and Zeng also evaluated $\det(\beta_{i+j+\ell})_{0\le i,j\le n}$ with $\ell\in\{1,2,3\}$. Now one may naturally ask if similar results exist for the $q$-Euler numbers. Our objective in this paper is to answer this question in the affirmative.

\begin{theorem}\label{th:q-Euler-HDet}
	\begin{align}
		&\underset{{0\le i,j\le n}}{\det}(\epsilon_{i+j})\notag\\
		&\quad = \frac{(-1)^{\binom{n+1}{2}}q^{\frac{1}{4}\binom{2n+2}{3}}}{(1-q)^{n(n+1)}} \prod_{k=1}^n \frac{(q^2,q^2;q^2)_k}{(-q,-q^2,-q^2,-q^3;q^2)_k},\label{eq:Hankel:+0}\\
		&\underset{{0\le i,j\le n}}{\det}(\epsilon_{i+j+1})\notag\\
		&\quad = \frac{(-1)^{\binom{n+2}{2}}q^{\frac{1}{4}\binom{2n+4}{3}}}{(1-q)^{n(n+1)} (1+q^2)^{n+1}} \prod_{k=1}^n \frac{(q^2,q^4;q^2)_k}{(-q^2,-q^3,-q^3,-q^4;q^2)_k},\label{eq:Hankel:+1}\\
		&\underset{{0\le i,j\le n}}{\det}(\epsilon_{i+j+2})\notag\\
		&\quad = \frac{(-1)^{\binom{n+2}{2}}q^{\frac{1}{4}\binom{2n+4}{3}} (1+q)^n \big(1-(-1)^n q^{(n+2)^2}\big)}{(1-q)^{n(n+1)} (1+q^2)^{2(n+1)} (1+q^3)^{n+1}} \prod_{k=1}^n \frac{(q^4,q^4;q^2)_k}{(-q^3,-q^4,-q^4,-q^5;q^2)_k}.\label{eq:Hankel:+2}
	\end{align}
\end{theorem}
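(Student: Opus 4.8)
The plan is to realize the $q$-Euler numbers as the moment sequence of a linear functional and to exploit the classical correspondence between Hankel determinants, monic orthogonal polynomials, and Jacobi continued fractions. Let $\mathcal{L}$ be the linear functional on polynomials determined by $\mathcal{L}(x^n)=\epsilon_n$, and let $\{P_n\}_{n\ge 0}$ be the associated monic orthogonal polynomials, normalized by
\begin{align*}
	P_{n+1}(x)=(x-b_n)P_n(x)-\lambda_n P_{n-1}(x),\qquad P_{-1}=0,\ P_0=1.
\end{align*}
Once $b_n$ and $\lambda_n$ are known, the fundamental evaluation $\det(\epsilon_{i+j})_{0\le i,j\le n}=\epsilon_0^{\,n+1}\prod_{k=1}^n\lambda_k^{\,n+1-k}$ (with $\epsilon_0=1$) yields \eqref{eq:Hankel:+0} after repackaging the product into $q$-Pochhammer symbols. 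The whole scheme hinges on pinning down this three-term recurrence, equivalently the $J$-fraction $\sum_{n\ge0}\epsilon_n t^n = 1/(1-b_0t-\lambda_1t^2/(1-b_1t-\cdots))$.

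First I would determine $b_n$ and $\lambda_n$ explicitly. Guided by the $(\,\cdot\,;q^2)$-Pochhammer products on the right-hand sides and by the factors $1+q^{k+1}$ entering \eqref{eq:q-Euler-rec}, I expect the $P_n$ to be a specialization, with base $q^2$ and a suitable choice of the three parameters, of the big $q$-Jacobi polynomials, whose recurrence coefficients are classically known in closed form. I would fix the specialization by matching the first few $b_n,\lambda_n$ against the values forced by \eqref{eq:q-Euler-rec} (for instance $b_0=\epsilon_1=-q/(1+q^2)$), and then \emph{prove} the guess, either by checking that the resulting $J$-fraction reproduces the moment recurrence \eqref{eq:q-Euler-rec}, or by confirming orthogonality of the specialized big $q$-Jacobi family against $\mathcal{L}$. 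This identification is the main obstacle: obtaining the correct parameter specialization and justifying it rigorously is where the real work lies, and all three evaluations rest upon it.

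With the recurrence in hand, the shift by one follows from the cofactor identity
\begin{align*}
	\det(\epsilon_{i+j+1})_{0\le i,j\le n}=(-1)^{n+1}P_{n+1}(0)\,\det(\epsilon_{i+j})_{0\le i,j\le n},
\end{align*}
obtained by expanding the determinantal formula for the monic $P_{n+1}$ along its last row at $x=0$. Substituting the big $q$-Jacobi evaluation $P_{n+1}(0)$, a single ${}_2\phi_1$-type value contributing the clean factor $q^{(n+1)^2}$ together with a product of $q$-Pochhammer symbols, produces \eqref{eq:Hankel:+1}. The sign and power bookkeeping is reassuring: the ratio of the prefactors of \eqref{eq:Hankel:+1} and \eqref{eq:Hankel:+0} is exactly $(-1)^{n+1}q^{(n+1)^2}$.

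For the shift by two I would invoke the Desnanot--Jacobi (Dodgson condensation) identity applied to the $(n+2)\times(n+2)$ Hankel matrix, namely
\begin{align*}
	\det(\epsilon_{i+j})_{0\le i,j\le n+1}\cdot\det(\epsilon_{i+j+2})_{0\le i,j\le n-1}
	=\det(\epsilon_{i+j})_{0\le i,j\le n}\cdot\det(\epsilon_{i+j+2})_{0\le i,j\le n}-\big(\det(\epsilon_{i+j+1})_{0\le i,j\le n}\big)^2.
\end{align*}
Since \eqref{eq:Hankel:+0} and \eqref{eq:Hankel:+1} are already established, this is a first-order recurrence determining $\det(\epsilon_{i+j+2})_{0\le i,j\le n}$, and I would prove \eqref{eq:Hankel:+2} by induction, verifying that the claimed closed form satisfies it. This is the delicate step: unlike the pure products in \eqref{eq:Hankel:+0} and \eqref{eq:Hankel:+1}, the recurrence combines two terms \emph{additively}, and it is precisely the resulting cancellation that manufactures the anomalous factor $1-(-1)^n q^{(n+2)^2}$. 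Alternatively, one may realize $\{\epsilon_{n+2}\}$ as the moments of the Christoffel-type transform $p\mapsto\mathcal{L}(x^2p)$ and compute its $J$-fraction directly, the lone ``defect'' in that transformed recurrence being what the extra factor records. In every case the remaining work is the routine repackaging of the products of $\lambda_k$ and of $P_{n+1}(0)$ into the stated $(\,\cdot\,;q^2)$-Pochhammer form.
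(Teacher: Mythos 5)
Your proposal is sound, and its core coincides with the paper's: realize $\epsilon_n$ as the moments of a linear functional, identify the monic Favard polynomials as a specialization of the big $q$-Jacobi family, and convert the three-term recurrence into Hankel evaluations via the $J$-fraction/Heilermann formula \eqref{eq:det-general-0}. (One correction to your guess: the paper's specialization is in base $q$, not $q^2$, namely ${}_3\phi_2(q^{-n},-q^{n+\ell+1},z;q^{\ell+1},0;q,q)$ suitably rescaled, i.e.\ big $q$-Jacobi parameters $a=q^{\ell}$, $b=-1$, $c=0$; the base-$q^2$ Pochhammers in the theorem appear only after simplification. Your verification strategy --- confirm orthogonality of the guessed family against the moment functional --- is exactly what the paper does, via an explicit functional $\Phi$ on the basis $\qbinom{n,z}{n}_q$ and the $q$-Chu--Vandermonde sum.) Where you genuinely differ is in the two shifted determinants. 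The paper constructs a \emph{second} orthogonal family $\{\cP_{1,n}\}$ for the shifted moments $\{\epsilon_{k+1}\}$, at the cost of a second orthogonality proof, obtains \eqref{eq:Hankel:+1} again from \eqref{eq:det-general-0}, and then derives \eqref{eq:Hankel:+2} from the shift identity \eqref{eq:det-general-1} applied to that family, so that the anomalous factor $1-(-1)^nq^{(n+2)^2}$ falls out of a single $q$-Chu--Vandermonde evaluation of $\cP_{1,n+1}(0)$. You instead stay with the single $\ell=0$ family: \eqref{eq:det-general-1} handles the $+1$ shift (this works; one finds $\cP_{0,n+1}(0)=(q;q)_{n+1}\,q^{(n+1)^2}/\big((1-q)^{n+1}(-q^{n+2};q)_{n+1}\big)$, clean and carrying exactly the sign $(-1)^{n+1}$ and power $q^{(n+1)^2}$ you predicted), and Desnanot--Jacobi condensation handles the $+2$ shift. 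Your condensation identity is stated correctly for Hankel matrices, and since \eqref{eq:Hankel:+0} never vanishes it determines $\det(\epsilon_{i+j+2})_{0\le i,j\le n}$ by a first-order recurrence, so inductive verification of the claimed product is a legitimate proof. The trade-off: your route needs only one orthogonality argument, but it turns \eqref{eq:Hankel:+2} into a verification --- you must already possess the closed form, and the inductive step is a fairly heavy $q$-identity mixing the factors $1+(-1)^nq^{(n+1)^2}$ and $1-(-1)^nq^{(n+2)^2}$ --- whereas the paper pays for a second orthogonal family and in return \emph{derives} the anomalous factor rather than checking it.
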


\begin{remark*}
	Taking $q\to 1$, \eqref{eq:Hankel:+0} becomes
	\begin{align*}
		\lim_{q\to 1} \underset{{0\le i,j\le n}}{\det}(\epsilon_{i+j}) = (-\tfrac{1}{16})^{\binom{n+1}{2}} \prod_{k=1}^n \big((2k)!!\big)^2 = (-\tfrac{1}{4})^{\binom{n+1}{2}} \prod_{k=1}^n (k!)^2.
	\end{align*}
	In light of the fact that $\lim_{q\to 1}\epsilon_n = E_n(0)$, the above relation matches \eqref{eq:Euler-poly-det} with $x=0$.
\end{remark*}

It is well-known that the evaluation of Hankel determinants is closely related to orthogonal polynomials and continued fractions, so several preliminary lemmas are provided in Section \ref{sec:pre}. Then in Section \ref{sec:big-q-Jacobi}, we introduce the big $q$-Jacobi polynomials, whose orthogonality will be used for our determinant calculations. In addition, to clearly characterize the orthogonality of the big $q$-Jacobi polynomials, we require a linear functional on $\mathbb{Q}(q)[z]$, as discussed in Section \ref{sec:lin-func}. With the above preparations, Section \ref{sec:CF} is devoted to the continued fraction expressions for the generating functions of $\{\epsilon_{k}\}_{k\ge 0}$ and $\{\epsilon_{k+1}\}_{k\ge 0}$, thereby leading to the proof of Theorem \ref{th:q-Euler-HDet} in Section \ref{sec:HDet}. Finally, in Section \ref{sec:con}, we close this paper with some additional discussions.

\section{Preliminaries}\label{sec:pre}

A family of polynomials $\{p_n(z)\}_{n\ge 0}$ with $p_n(z)$ of degree $n$ is called \emph{orthogonal} if there is a linear functional $L$ on the space of polynomials in $z$ such that $L\big(p_m(z)p_n(z)\big) = \delta_{m,n} \sigma_n$ where $\delta_{m,n}$ is the Kronecker delta and $\{\sigma_n\}_{n\ge 0}$ is a fixed nonzero sequence. Notably, orthogonal polynomials can be characterized by Favard's Theorem; see \cite[p.~195, Theorem 50.1]{Wal1948} or \cite[p.~21, Theorem 12]{Kra1999}.

\begin{lemma}[Favard's Theorem]\label{le:Favard}
	Let $\{p_n(z)\}_{n\ge 0}$ be a family of monic polynomials with $p_n(z)$ of degree $n$. Then they are orthogonal if and only if there exist sequences $\{a_n\}_{n\ge 0}$ and $\{b_n\}_{n\ge 1}$ with $b_n\ne 0$ such that $p_0(z)=1$, $p_1(z)=a_0+z$, and for $n\ge 1$,
	\begin{align}\label{eq:3TermRec}
		p_{n+1}(z) = (a_n + z)p_n(z) - b_n p_{n-1}(z).
	\end{align}
\end{lemma}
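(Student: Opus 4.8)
The plan is to prove both implications of this classical equivalence, treating the functional $L$ and the recurrence coefficients as the two objects to be translated into one another.

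For the forward direction (orthogonality implies the recurrence), I would exploit that the monic family $\{p_0,\dots,p_n\}$ is a basis for the polynomials of degree at most $n$. Since $zp_n(z)$ and $p_{n+1}(z)$ are both monic of degree $n+1$, their difference lies in this span, so I can write $zp_n-p_{n+1}=\sum_{k=0}^n c_{n,k}p_k$. Pairing against $p_j$ with $L$ and using $L(p_jp_k)=\delta_{j,k}\sigma_k$ gives $c_{n,j}\sigma_j=L(p_n\cdot zp_j)$. Because $zp_j$ has degree $j+1$, it is orthogonal to $p_n$ whenever $j+1<n$, forcing $c_{n,j}=0$ for $j<n-1$; only the two coefficients $c_{n,n}$ and $c_{n,n-1}$ survive, and setting $a_n=-c_{n,n}$, $b_n=c_{n,n-1}$ recovers \eqref{eq:3TermRec}. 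The nonvanishing of $b_n$ comes for free: writing $zp_{n-1}=p_n+(\text{lower order})$ shows $b_n\sigma_{n-1}=L(p_n\cdot zp_{n-1})=\sigma_n$, so $b_n=\sigma_n/\sigma_{n-1}\neq0$. The normalizations $p_0=1$ and $p_1=a_0+z$ are immediate from monicity.

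For the converse (the recurrence implies orthogonality), the first step is to manufacture the functional: since the $p_n$ form a basis of $\mathbb{Q}(q)[z]$, I define $L$ to be the unique linear functional with $L(p_0)=1$ and $L(p_n)=0$ for $n\ge1$. The heart of the argument is then to prove $L(p_kp_n)=0$ for $k<n$ by induction on $k$. The base case $k=0$ is exactly the defining relation $L(p_n)=0$. For the inductive step I would use \eqref{eq:3TermRec} to replace $p_k$ by $(z+a_{k-1})p_{k-1}-b_{k-1}p_{k-2}$, then push the stray factor of $z$ onto $p_n$ via $zp_n=p_{n+1}-a_np_n+b_np_{n-1}$; every resulting term is of the form $L(p_{k-1}p_m)$ or $L(p_{k-2}p_m)$ with $m$ strictly larger than $k-1$ or $k-2$, hence vanishes by the inductive hypothesis. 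Finally, to see the diagonal entries are nonzero I would compute $\sigma_n=L(p_n^2)$ by feeding the recurrence into $L(p_n\cdot zp_{n-1})$ in two different ways, which yields the relation $\sigma_n=b_n\sigma_{n-1}$; iterating from $\sigma_0=L(1)=1$ gives $\sigma_n=\prod_{j=1}^n b_j\neq0$ thanks to the hypothesis $b_n\neq0$.

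I expect the main obstacle to be the converse direction, and specifically the bookkeeping in the induction that establishes $L(p_kp_n)=0$: the delicate point is choosing to induct on the smaller index $k$ (so that the base case is the very definition of $L$) and deciding which factor absorbs the extra $z$, so that after one application of the three-term recurrence on each side all the indices that appear are safely inside the range already controlled. Once this orthogonality is in hand, the determination of $\sigma_n$ and the appeal to $b_n\neq0$ are routine.
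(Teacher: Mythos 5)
Your proof is correct, and it is essentially the classical argument: the paper itself does not prove this lemma but cites it as a known result (Wall \cite{Wal1948} and Krattenthaler \cite{Kra1999}), and both directions of your argument --- expanding $zp_n - p_{n+1}$ in the orthogonal basis and pairing with $L$ to kill all but two coefficients, then conversely defining $L$ by $L(p_0)=1$, $L(p_n)=0$ and running the induction on the smaller index with $\sigma_n = b_n\sigma_{n-1}$ --- are exactly the standard proof found in those references. The only cosmetic points are that the functional should live on $\mathbb{K}[z]$ for a general field (not specifically $\mathbb{Q}(q)[z]$, which is just this paper's setting), and the inductive step at $k=1$ should note that the term $b_{k-1}p_{k-2}$ is absent there; neither affects correctness.
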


Note that $\{z^k\}_{k\ge 0}$ forms a basis of the space of polynomials in $z$. Therefore, for a family of orthogonal polynomials, to explicitly express its associated linear functional $L$, it suffices to evaluate $L(z^k)$ for each $k\ge 0$. Such evaluations have a surprising connection with \emph{Jacobi continued fractions}, or \emph{$J$-fractions} for short.

\begin{lemma}[Cf.~{\cite[p.~197, Theorem 51.1]{Wal1948}} or {\cite[p.~21, Theorem 13]{Kra1999}}]\label{le:linear-func}
	Let $L$ be an associated linear functional for a family of orthogonal monic polynomials $\{p_n(z)\}_{n\ge 0}$ with $p_n(z)$ of degree $n$. Then
	\begin{align}
		\sum_{k\ge 0} L(z^k) x^k = \cfrac{L(z^0)}{1 + a_{0} x -\cfrac{b_{1} x^2}{1 + a_{1} x - \cfrac{b_{2} x^2}{1 + a_{2} x - \lastddots}}},
	\end{align}
	where $\{a_n\}_{n\ge 0}$ and $\{b_n\}_{n\ge 1}$ are given by Favard's Theorem, as in \eqref{eq:3TermRec}.
\end{lemma}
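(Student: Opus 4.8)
The plan is to establish the identity at the level of formal power series in $x$, by showing that the $n$-th convergent of the $J$-fraction agrees with $F(x):=\sum_{k\ge 0}L(z^k)\,x^k$ to order $x^{2n+2}$, and then passing to the limit $n\to\infty$. Write $P_n(x)/Q_n(x)$ for the $n$-th convergent of the continued fraction on the right-hand side, so that $Q_{-1}=1$, $Q_0=1+a_0x$, $P_{-1}=0$, $P_0=L(z^0)$, and for $n\ge 1$ both families obey the common recurrence
\[
Q_n(x)=(1+a_nx)Q_{n-1}(x)-b_nx^2Q_{n-2}(x),\qquad P_n(x)=(1+a_nx)P_{n-1}(x)-b_nx^2P_{n-2}(x).
\]
Comparing this with the three-term recurrence \eqref{eq:3TermRec} after the substitution $z\mapsto 1/x$ followed by multiplication by $x^{n+1}$, one checks directly that $Q_n(x)=x^{n+1}p_{n+1}(1/x)$; that is, the continued-fraction denominators are exactly the \emph{reversed} orthogonal polynomials. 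Since $Q_n(0)=1$, each $P_n/Q_n$ is a genuine element of $\mathbb{Q}(q)[[x]]$, and the telescoping determinant identity $P_nQ_{n-1}-P_{n-1}Q_n=L(z^0)\big(\prod_{k=1}^{n}b_k\big)x^{2n}$ (which follows at once by induction from the recurrence) shows, because each $b_k\ne 0$, that the convergents form an $x$-adic Cauchy sequence whose limit is the continued fraction itself and satisfies $J(x)\equiv P_n(x)/Q_n(x)\pmod{x^{2n+2}}$. Hence it suffices to prove $F(x)\equiv P_n(x)/Q_n(x)\pmod{x^{2n+2}}$ for every $n$.

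The core of the argument is a single orthogonality computation. Writing $p_{n+1}(z)=\sum_{i=0}^{n+1}c_iz^i$ with $c_{n+1}=1$, the identity $Q_n(x)=x^{n+1}p_{n+1}(1/x)$ gives $[x^j]Q_n(x)=c_{n+1-j}$, whence for every $m\ge n+1$ (so that no negative powers of $x$ intervene)
\[
[x^m]\big(Q_n(x)F(x)\big)=\sum_{i=0}^{n+1}c_i\,L\big(z^{m-n-1+i}\big)=L\big(z^{\,m-n-1}\,p_{n+1}(z)\big).
\]
Because $p_{n+1}$ is orthogonal to every polynomial of degree at most $n$, the right-hand side vanishes exactly when $0\le m-n-1\le n$, i.e.\ for all $m$ with $n+1\le m\le 2n+1$. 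Since $\deg P_n=n$, the coefficient $[x^m]P_n$ also vanishes for $m\ge n+1$, and therefore $[x^m]\big(Q_nF-P_n\big)=0$ for every $m$ in the middle band $n+1\le m\le 2n+1$.

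It remains to control the low-order coefficients $m\le n$, and here I would argue by induction on $n$ using the shared recurrence. Assuming $Q_{n-1}F-P_{n-1}=O(x^{2n})$ and $Q_{n-2}F-P_{n-2}=O(x^{2n-2})$, the recurrence yields
\[
Q_nF-P_n=(1+a_nx)\big(Q_{n-1}F-P_{n-1}\big)-b_nx^2\big(Q_{n-2}F-P_{n-2}\big)=O(x^{2n}).
\]
Combining this with the vanishing of the coefficients of $x^{2n}$ and $x^{2n+1}$ from the previous step upgrades the estimate to $Q_nF-P_n=O(x^{2n+2})$, which closes the induction (the cases $n=0,-1$ being immediate from $Q_0F-P_0=(1+a_0x)F-L(z^0)$ and $Q_{-1}F-P_{-1}=F$). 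Dividing by $Q_n$, a unit of $\mathbb{Q}(q)[[x]]$ since $Q_n(0)=1$, gives $F-P_n/Q_n=O(x^{2n+2})$; together with $J\equiv P_n/Q_n\pmod{x^{2n+2}}$ this forces $F=J$. The main obstacle is precisely this interleaving: orthogonality alone pins down only the middle band $n+1\le m\le 2n+1$, while the recurrence alone yields only order $2n$, so neither suffices by itself and the two must be merged at each inductive step. A secondary point to treat carefully is that the entire argument is formal — "convergence" of the continued fraction means $x$-adic convergence of its convergents, and no positivity or definiteness of $L$ is needed, only that the $b_n$ are nonzero as guaranteed by Favard's Theorem.
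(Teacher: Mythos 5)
Your proof is correct, but there is nothing in the paper to compare it against: the paper does not prove this lemma, it simply quotes it from the literature (\cite[Theorem 51.1]{Wal1948}, \cite[Theorem 13]{Kra1999}). What you have written is a complete, self-contained reconstruction of the classical argument behind those citations, and every step checks out: the identification $Q_n(x)=x^{n+1}p_{n+1}(1/x)$ of the continued-fraction denominators with the reversed orthogonal polynomials, the determinant identity $P_nQ_{n-1}-P_{n-1}Q_n=L(z^0)\big(\prod_{k=1}^{n}b_k\big)x^{2n}$ yielding $x$-adic convergence and $J\equiv P_n/Q_n\pmod{x^{2n+2}}$, the convolution computation $[x^m]\big(Q_n(x)F(x)\big)=L\big(z^{m-n-1}p_{n+1}(z)\big)$ for $m\ge n+1$, the bound $\deg P_n\le n$, and the merging of the orthogonality band $n+1\le m\le 2n+1$ with the recurrence-driven induction to get $Q_nF-P_n=O(x^{2n+2})$. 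Two small remarks. First, your $n=0$ base case is not quite ``immediate'': the vanishing of the coefficient of $x$ in $(1+a_0x)F-L(z^0)$ is exactly the orthogonality relation $L(p_0p_1)=0$, i.e.\ the $m=1$ instance of your band argument, so it should be justified by the same mechanism rather than waved through. Second, the nonvanishing of the $b_k$ plays no role in the $x$-adic Cauchy property (a vanishing $b_k$ would only make the tail of the fraction terminate); it is needed for Favard's Theorem and orthogonality, so the parenthetical attributing the Cauchy property to $b_k\ne 0$ is harmless but misplaced. For contrast, a genuinely different route to the same lemma is the combinatorial one of Flajolet--Viennot, in which both the moments $L(z^k)$ and the coefficients of the $J$-fraction are identified with generating functions of weighted Motzkin paths; your formal-algebraic proof has the advantage of working verbatim over any field and making the order-of-contact estimate $F-P_n/Q_n=O(x^{2n+2})$ explicit, which is also what underlies the Hankel determinant formula \eqref{eq:det-general-0} used later in the paper.
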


Finally, given a $J$-fraction, Heilermann established evaluations of the Hankel determinants for the coefficients in the series expansion of this continued fraction.

\begin{lemma}[Cf.~{\cite[pp.~115--116, Theorem 29]{Kra2005}}]\label{le:det-cf-op}
	Let $\{\mu_k\}_{k\ge 0}$ be a sequence such that its generating function $\sum_{k\ge 0}\mu_k x^k$ has the $J$-fraction expression
	\begin{align*}
		\sum_{k\ge 0}\mu_{k} x^k = \cfrac{\mu_0}{1 + a_{0} x -\cfrac{b_{1} x^2}{1 + a_{1} x - \cfrac{b_{2} x^2}{1 + a_{2} x - \lastddots}}}.
	\end{align*}
	Then for $n\ge 0$,
	\begin{align}\label{eq:det-general-0}
		\underset{{0\le i,j\le n}}{\det}(\mu_{i+j}) = \mu_0^{n+1} b_1^{n} b_2^{n-1} \cdots b_{n-1}^2 b_n.
	\end{align}
	Further, with $a_n$ and $b_n$ from the J-fraction above, define $\{p_n(z)\}_{n\ge 0}$ a family of polynomials given by a three-term recursive relation for $n\ge 1$, 
	\begin{align*}
		p_{n+1}(z) = (a_n + z)p_n(z) - b_n p_{n-1}(z),
	\end{align*}
	with initial conditions $p_0(z)=1$ and $p_1(z)=a_0+z$. Then
	\begin{align}\label{eq:det-general-1}
		\underset{{0\le i,j\le n}}{\det}(\mu_{i+j+1}) = \underset{{0\le i,j\le n}}{\det}(\mu_{i+j})\cdot (-1)^{n+1} p_{n+1}(0).
	\end{align}
\end{lemma}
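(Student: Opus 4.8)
The plan is to route everything through the orthogonal-polynomial machinery provided by the preceding lemmas. I would begin by defining the linear functional $L$ on the polynomial ring by $L(z^k)=\mu_k$, so that the Hankel matrix in question is $\big(L(z^{i+j})\big)_{0\le i,j\le n}$. Since the $J$-fraction coefficients satisfy $b_n\neq 0$, the monic polynomials $\{p_n(z)\}_{n\ge 0}$ generated by the three-term recurrence in the statement are, by Favard's Theorem (Lemma \ref{le:Favard}), orthogonal with respect to \emph{some} functional; normalizing that functional by $L(1)=\mu_0$ and invoking Lemma \ref{le:linear-func}, the series $\sum_{k\ge 0}L(z^k)x^k$ coincides with the given $J$-fraction $\sum_{k\ge 0}\mu_k x^k$, forcing $L(z^k)=\mu_k$ for every $k$. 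Hence $\{p_n\}$ is exactly the orthogonal family attached to our moment functional $L$, and I may use $L(p_mp_n)=\delta_{m,n}\sigma_n$ with $\sigma_n:=L(p_n^2)\neq 0$.

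For \eqref{eq:det-general-0}, I would expand $p_i(z)=\sum_k P_{i,k}z^k$ with $P$ unit lower triangular (as $p_i$ is monic of degree $i$). Then $\big(L(p_ip_j)\big)_{0\le i,j\le n}=P\,(\mu_{i+j})_{0\le i,j\le n}\,P^{\mathsf T}$, and since $\det P=1$ the Hankel determinant equals $\det\big(L(p_ip_j)\big)=\prod_{k=0}^n\sigma_k$ by orthogonality. It then remains to identify $\sigma_k$. Starting from $\sigma_0=L(1)=\mu_0$, I substitute $p_n=(a_{n-1}+z)p_{n-1}-b_{n-1}p_{n-2}$ into one factor of $\sigma_n=L(p_n^2)$; orthogonality kills all but one term, leaving $\sigma_n=L(zp_{n-1}p_n)$. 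Substituting $zp_n=p_{n+1}-a_np_n+b_np_{n-1}$ here, only $b_nL(p_{n-1}^2)$ survives, so $\sigma_n=b_n\sigma_{n-1}$. Thus $\sigma_k=\mu_0\prod_{j=1}^k b_j$, and counting the multiplicity $n-j+1$ of each $b_j$ across $k=j,\dots,n$ gives $\prod_{k=0}^n\sigma_k=\mu_0^{n+1}b_1^n b_2^{n-1}\cdots b_n$, as claimed.

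For \eqref{eq:det-general-1}, the key tool is the classical determinantal representation of the orthogonal polynomial. I would consider
\[
\widetilde p_{n+1}(z)=\det\begin{pmatrix} \mu_0 & \cdots & \mu_{n+1}\\ \vdots & & \vdots\\ \mu_n & \cdots & \mu_{2n+1}\\ 1 & \cdots & z^{n+1}\end{pmatrix}.
\]
Cofactor expansion along the last row shows $\deg\widetilde p_{n+1}=n+1$ with leading coefficient $H_n:=\det(\mu_{i+j})_{0\le i,j\le n}$; moreover, applying $L$ to $z^m\widetilde p_{n+1}(z)$ for $0\le m\le n$ replaces the last row by row $m$ of the matrix, forcing the value $0$. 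Hence $\widetilde p_{n+1}/H_n$ is monic of degree $n+1$ and orthogonal to $1,z,\dots,z^n$, so by uniqueness it equals $p_{n+1}$. Setting $z=0$ and expanding along the resulting last row $(1,0,\dots,0)$ leaves a single cofactor, namely the minor obtained by deleting the last row and first column—exactly $\det(\mu_{i+j+1})_{0\le i,j\le n}$—carrying sign $(-1)^{n+1}$. This yields $p_{n+1}(0)=(-1)^{n+1}\det(\mu_{i+j+1})/H_n$, which rearranges to \eqref{eq:det-general-1}.

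Once orthogonality is in hand, the remaining steps are bookkeeping, so I expect the only genuinely delicate point to be the normalization argument in the first paragraph: I must ensure that the functional furnished abstractly by Favard's Theorem is the \emph{same} as the moment functional $L(z^k)=\mu_k$ dictated by the $J$-fraction, not merely a functional sharing the recurrence coefficients. Everything downstream—the multiplicity count for the $b_j$ and the cofactor sign $(-1)^{n+1}$—is routine.
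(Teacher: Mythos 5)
Your proof is correct. Note, however, that the paper does not prove this lemma at all: it is quoted as a known result (Heilermann's formula) from Krattenthaler's survey \cite[Theorem 29]{Kra2005}, so any comparison is really with the classical argument in that reference --- and your proof is essentially that classical argument, assembled from the paper's own Lemmas \ref{le:Favard} and \ref{le:linear-func}. Your three steps are all sound: (i) identifying the abstract Favard functional with the moment functional $L(z^k)=\mu_k$ via the normalization $L(1)=\mu_0$ and Lemma \ref{le:linear-func} (you are right to flag this as the delicate point, and your resolution is the standard one); (ii) the congruence $\bigl(L(p_ip_j)\bigr)=P(\mu_{i+j})P^{\mathsf T}$ with $P$ unit triangular, plus the recursion $\sigma_n=b_n\sigma_{n-1}$ obtained by feeding the three-term recurrence into $L(p_n^2)$, which gives \eqref{eq:det-general-0}; (iii) the determinantal representation of $p_{n+1}$ and evaluation at $z=0$, which gives \eqref{eq:det-general-1} with the correct sign $(-1)^{n+1}$. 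One point worth making explicit: the uniqueness step in (iii) (``monic of degree $n+1$ and orthogonal to all lower degrees determines the polynomial'') requires quasi-definiteness of $L$, i.e.\ $H_m\ne 0$ for all $m\le n$; this follows from your part (i)--(ii) together with the standing assumptions $\mu_0\ne 0$ and $b_j\ne 0$, which are implicit in the $J$-fraction hypothesis and which you invoke at the outset. With that remark added, the proof is complete and self-contained, which is more than the paper itself provides.
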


\begin{remark*}
	It is asserted by Favard's Theorem that the polynomials $p_n(z)$ in Lemma \ref{le:det-cf-op} are orthogonal. Hence, we shall call $p_n(z)$ the \emph{Favard orthogonal polynomials} for $\{\mu_k\}_{k\ge 0}$ throughout.
\end{remark*}

\section{Big $q$-Jacobi polynomials}\label{sec:big-q-Jacobi}

The \emph{big $q$-Jacobi polynomials} were introduced by Andrews and Askey \cite{AA1985}, and they are a family of basic hypergeometric orthogonal polynomials in the basic Askey scheme. For our purpose, the following specialization is required.

For each nonnegative integer $\ell$, we define a family of polynomials $\{\cJ_{\ell,n}(z)\}_{n\ge 0}$ by
\begin{align}
	\cJ_{\ell,n}(z):={}_3 \phi_{2}\left(\begin{matrix}
		q^{-n},-q^{n+\ell+1},z\\
		q^{\ell+1},0
	\end{matrix};q,q\right).
\end{align}
Here the \textit{$q$-hypergeometric series} ${}_{r+1}\phi_r$ is defined by
\begin{align*}
	{}_{r+1}\phi_{r} \left(\begin{matrix} a_1,a_2,\ldots,a_{r+1}\\ b_1,b_2,\ldots,b_r  \end{matrix}; q, z\right):=\sum_{n\ge 0} \frac{(a_1,a_2,\ldots,a_{r+1};q)_n \, z^n}{(q,b_1,b_2,\ldots,b_{r};q)_n}.
\end{align*}
It is given in \cite[p.~438, eq.~(14.5.3)]{KLS2010} that $\cJ_{\ell,n}(z)$ satisfies the three-term recursive relation
\begin{align}
	A_{\ell,n} \cJ_{\ell,n+1}(z) = (A_{\ell,n}+B_{\ell,n} -1+z) \cJ_{\ell,n}(z) - B_{\ell,n} \cJ_{\ell,n-1}(z),
\end{align}
where
\begin{align*}
	A_{\ell,n}&=\frac{1-q^{2n+2\ell+2}}{(1+q^{2n+\ell+1})(1+q^{2n+\ell+2})},\\
	B_{\ell,n}&=-\frac{q^{2n+2\ell+1}(1-q^{2n})}{(1+q^{2n+\ell})(1+q^{2n+\ell+1})}.
\end{align*}
If we normalize $\cJ_{\ell,n}(z)$ as monic polynomials
\begin{align}\label{eq:tilde-cJ-def}
	\widetilde{\cJ}_{\ell,n}(z) := \frac{(q^{\ell+1};q)_n}{(-q^{n+\ell+1};q)_n}\cJ_{\ell,n}(z),
\end{align}
then $\widetilde{\cJ}_{\ell,0}(z) = 1$, $\widetilde{\cJ}_{\ell,1}(z) = \widetilde{a}_{\ell,0}+z$, and for $n\ge 1$,
\begin{align}\label{eq:rec-tJ}
	\widetilde{\cJ}_{\ell,n+1}(z) = (\widetilde{a}_{\ell,n} + z)\widetilde{\cJ}_{\ell,n}(z) - \widetilde{b}_{\ell,n} \widetilde{\cJ}_{\ell,n-1}(z),
\end{align}
where
\begin{align*}
	\widetilde{a}_{\ell,n} &= -\frac{q^{2n+\ell+1} (1+q) (1+q^\ell)}{(1+q^{2n+\ell}) (1+q^{2n+\ell+2})},\\
	\widetilde{b}_{\ell,n} &= -\frac{q^{2n+2\ell+1} (1-q^{2n}) (1-q^{2n+2\ell})}{(1+q^{2n+\ell-1})(1+q^{2n+\ell})^2(1+q^{2n+\ell+1})}.
\end{align*}

Let us further recall a standard result for orthogonal polynomials presented in \cite[p.~25, Exercise 4.4]{Chi1978}.

\begin{lemma}
	Suppose that $\{p_n(z)\}_{n\ge 0}$ is a family of polynomials given by a three-term recursive relation for $n\ge 1$,
	\begin{align*}
		p_{n+1}(z) = (a_n + z)p_n(z) - b_n p_{n-1}(z),
	\end{align*}
	with initial conditions $p_0(z)=1$ and $p_1(z)=a_0+z$ where $\{a_n\}_{n\ge 0}$ and $\{b_n\}_{n\ge 1}$ are fixed sequences. Let
	\begin{align*}
		r_n(z) := u^{-n} p_n(uz+v)
	\end{align*}
	for $n\ge 0$ be a new family of polynomials. Then $r_n(z)$ satisfies the three-term recursive relation
	\begin{align}\label{eq:rec-r-trans}
		r_{n+1}(z) = \big(u^{-1}(a_n+v) + z\big)r_n(z) - u^{-2}b_n r_{n-1}(z).
	\end{align}
\end{lemma}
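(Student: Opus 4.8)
The plan is to verify the claimed recurrence \eqref{eq:rec-r-trans} by direct substitution, since $r_n(z)$ is defined explicitly through $p_n$. The only input needed is the given three-term recurrence for $\{p_n(z)\}_{n\ge 0}$, together with careful bookkeeping of the powers of $u$.

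First I would substitute $z\mapsto uz+v$ into the recurrence $p_{n+1}(z)=(a_n+z)p_n(z)-b_np_{n-1}(z)$, which produces
\[
p_{n+1}(uz+v)=(a_n+uz+v)\,p_n(uz+v)-b_n\,p_{n-1}(uz+v),
\]
and then multiply through by $u^{-(n+1)}$. The left-hand side is immediately $r_{n+1}(z)$ by definition.

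The key step is to redistribute the factor $u^{-(n+1)}$ on the right so that each $p_k(uz+v)$ is paired with precisely $u^{-k}$, thereby converting it into $r_k(z)$. For the middle term I would write $u^{-(n+1)}(a_n+uz+v)=u^{-1}(a_n+uz+v)\cdot u^{-n}$ and expand $u^{-1}(a_n+uz+v)=u^{-1}(a_n+v)+z$, which yields the coefficient $u^{-1}(a_n+v)+z$ of $r_n(z)$. For the last term, $u^{-(n+1)}b_n=u^{-2}b_n\cdot u^{-(n-1)}$ gives $u^{-2}b_n\,r_{n-1}(z)$. Collecting the three pieces reproduces \eqref{eq:rec-r-trans} exactly.

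For completeness I would also confirm the initial data of the transformed family, namely $r_0(z)=p_0(uz+v)=1$ and $r_1(z)=u^{-1}p_1(uz+v)=u^{-1}(a_0+v)+z$, so that $\{r_n(z)\}_{n\ge0}$ is again monic with the expected linear term. I do not anticipate any genuine obstacle: the argument is a routine computation, and the only point requiring care is the distribution of the powers of $u$ to ensure the affine substitution $z\mapsto uz+v$ is compatible with the $u^{-n}$ scaling in the definition of $r_n(z)$.
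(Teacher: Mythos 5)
Your proof is correct. The paper does not actually prove this lemma---it cites it as a standard exercise (Chihara, \emph{An Introduction to Orthogonal Polynomials}, p.~25, Exercise 4.4)---and your direct computation (substituting $z\mapsto uz+v$ into the recurrence, multiplying by $u^{-(n+1)}$, and redistributing the powers of $u$ so each $p_k(uz+v)$ carries exactly $u^{-k}$) is precisely the routine verification that citation is standing in for, including the check of the initial conditions $r_0(z)=1$ and $r_1(z)=u^{-1}(a_0+v)+z$.
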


Now we introduce a family of polynomials $\{\cP_{\ell,n}(z)\}_{n\ge 0}$ for each nonnegative integer $\ell$:
\begin{align}\label{eq:cP-def}
	\cP_{\ell,n}(z):=\frac{(-1)^n (q^{\ell+1};q)_n}{q^n (1-q)^n (-q^{n+\ell+1};q)_n} {}_3 \phi_{2}\left(\begin{matrix}
		q^{-n},-q^{n+\ell+1},q\big(1-(1-q)z\big)\\
		q^{\ell+1},0
	\end{matrix};q,q\right).
\end{align}
In other words,
\begin{align*}
	\cP_{\ell,n}(z) = \frac{(-1)^n}{q^n (1-q)^n} \widetilde{\cJ}_{\ell,n}\big((q^2-q)z+q\big).
\end{align*}
The following result is a direct consequence of \eqref{eq:rec-tJ} and \eqref{eq:rec-r-trans}.

\begin{theorem}\label{th:cP-rec}
	We have $\cP_{\ell,0}(z) = 1$, $\cP_{\ell,1}(z) = a_{\ell,0} + z$, and for $n\ge 1$,
	\begin{align}\label{eq:rec-P}
		\cP_{\ell,n+1}(z) = (a_{\ell,n} + z)\cP_{\ell,n}(z) - b_{\ell,n} \cP_{\ell,n-1}(z),
	\end{align}
	where
	\begin{align*}
		a_{\ell,n} &= \frac{q^{2n+\ell} (1+q) (1+q^\ell)}{(1-q)(1+q^{2n+\ell}) (1+q^{2n+\ell+2})} - \frac{1}{1-q},\\
		b_{\ell,n} &= -\frac{q^{2n+2\ell-1} (1-q^{2n}) (1-q^{2n+2\ell})}{(1-q)^2 (1+q^{2n+\ell-1})(1+q^{2n+\ell})^2(1+q^{2n+\ell+1})}.
	\end{align*}
\end{theorem}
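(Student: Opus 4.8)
The plan is to obtain the recursion \eqref{eq:rec-P} for $\cP_{\ell,n}(z)$ as an immediate instance of the scaling–shifting lemma \eqref{eq:rec-r-trans} applied to the monic big $q$-Jacobi polynomials $\widetilde{\cJ}_{\ell,n}(z)$, whose three-term recursion \eqref{eq:rec-tJ} is already at our disposal. Since the paper already defines $\cP_{\ell,n}(z)=\tfrac{(-1)^n}{q^n(1-q)^n}\widetilde{\cJ}_{\ell,n}\big((q^2-q)z+q\big)$, no new analytic input is needed; the only genuine work is bookkeeping — pinning down the scaling parameters and then simplifying the two transformed coefficients into the advertised closed forms.

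First I would read off the parameters $u$ and $v$. Matching the displayed formula for $\cP_{\ell,n}$ against $r_n(z)=u^{-n}p_n(uz+v)$ with $p_n=\widetilde{\cJ}_{\ell,n}$, the argument $(q^2-q)z+q$ forces $u=q^2-q=-q(1-q)$ and $v=q$. One then checks the prefactor: $u^n=q^n(q-1)^n=(-1)^nq^n(1-q)^n$, so $u^{-n}=\tfrac{(-1)^n}{q^n(1-q)^n}$, which is exactly the normalizing constant. Hence $\cP_{\ell,n}(z)=u^{-n}\widetilde{\cJ}_{\ell,n}(uz+v)$ with $u=q^2-q$ and $v=q$, and the hypotheses of the transformation lemma hold verbatim with $a_n=\widetilde{a}_{\ell,n}$ and $b_n=\widetilde{b}_{\ell,n}$.

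Applying \eqref{eq:rec-r-trans} then yields \eqref{eq:rec-P} directly, with
\[
a_{\ell,n}=u^{-1}(\widetilde{a}_{\ell,n}+v)=\frac{\widetilde{a}_{\ell,n}+q}{q^2-q},
\qquad
b_{\ell,n}=u^{-2}\widetilde{b}_{\ell,n}=\frac{\widetilde{b}_{\ell,n}}{q^2(1-q)^2}.
\]
It remains to simplify. For $b_{\ell,n}$, substituting the expression for $\widetilde{b}_{\ell,n}$ from \eqref{eq:rec-tJ} and cancelling the factor $q^2$ in the denominator against the numerator power $q^{2n+2\ell+1}$ (leaving $q^{2n+2\ell-1}$) reproduces the stated formula at once. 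For $a_{\ell,n}$, I would write $\widetilde{a}_{\ell,n}+q=q-\tfrac{q^{2n+\ell+1}(1+q)(1+q^\ell)}{(1+q^{2n+\ell})(1+q^{2n+\ell+2})}$ and divide by $q^2-q=q(q-1)$: the constant part contributes $\tfrac{1}{q-1}=-\tfrac{1}{1-q}$, while, because $q^2-q=-q(1-q)$, the negative fractional term acquires a second sign change and becomes the positive first summand $\tfrac{q^{2n+\ell}(1+q)(1+q^\ell)}{(1-q)(1+q^{2n+\ell})(1+q^{2n+\ell+2})}$, matching the claim.

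Finally I would settle the base cases. Since $\widetilde{\cJ}_{\ell,0}\equiv 1$ we get $\cP_{\ell,0}(z)=1$, and using $\widetilde{\cJ}_{\ell,1}(w)=\widetilde{a}_{\ell,0}+w$ one finds that the linear coefficient of $\cP_{\ell,1}$ is $u^{-1}\cdot u=1$ and its constant term is $u^{-1}(\widetilde{a}_{\ell,0}+q)=a_{\ell,0}$, so $\cP_{\ell,1}(z)=a_{\ell,0}+z$ as required. The main — indeed the only — obstacle is the rational-function simplification of $a_{\ell,n}$ and $b_{\ell,n}$, and the care needed to track the sign conventions hidden in $u=q^2-q$; once $u$ and $v$ are fixed there is no conceptual difficulty, and the result follows as a direct consequence of \eqref{eq:rec-tJ} and \eqref{eq:rec-r-trans}.
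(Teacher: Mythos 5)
Your proof is correct and follows exactly the paper's route: the paper also derives Theorem \ref{th:cP-rec} by applying the scaling transformation \eqref{eq:rec-r-trans} with $u=q^2-q$, $v=q$ to the monic big $q$-Jacobi recursion \eqref{eq:rec-tJ}, noting $\cP_{\ell,n}(z)=u^{-n}\widetilde{\cJ}_{\ell,n}(uz+v)$. Your parameter identification, sign bookkeeping, and simplification of $a_{\ell,n}$ and $b_{\ell,n}$ are all accurate, and in fact more detailed than the paper, which simply states the result as a direct consequence.
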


\section{A linear functional on $\mathbb{Q}(q)[z]$}\label{sec:lin-func}

Define for $0\le m\le n$ a family of polynomials in $\mathbb{Q}(q)[z]$,
\begin{align*}
	\qbinom{m,z}{n}_q := \frac{1}{[n]_q!} \prod_{k=m-n+1}^m ([k]_q+q^k z).
\end{align*}
It is clear that $\qbinom{m,z}{n}_q$ is of degree $n$ in $z$. Further, $\{\qbinom{n,z}{n}_q\}_{n\ge 0}$ forms a basis of $\mathbb{Q}(q)[z]$.

Let $\Phi$ be a linear functional on $\mathbb{Q}(q)[z]$ given by
\begin{align}\label{eq:Phi-n-n}
	\Phi\left(\qbinom{n,z}{n}_q\right) := \frac{1}{(-q^2;q)_n} \qquad (n\ge 0).
\end{align}

\begin{lemma}
	For $0\le m\le n$,
	\begin{align}\label{eq:Phi-m-n}
		\Phi\left(\qbinom{m,z}{n}_q\right) = \frac{(-1)^{n-m}q^{n-m}}{(-q^2;q)_n}.
	\end{align}
\end{lemma}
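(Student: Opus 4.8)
The plan is to reduce the whole statement to a single Pascal-type recurrence for the polynomials $\qbinom{m,z}{n}_q$ and then push the linear functional $\Phi$ through it. Since $\Phi$ is only linear (not multiplicative), the entire difficulty is to locate a recurrence among the $\qbinom{m,z}{n}_q$ whose right-hand side again lies in the same family, so that applying $\Phi$ reduces everything to the defining values \eqref{eq:Phi-n-n} on the diagonal. Writing $f_k(z):=[k]_q+q^kz$ for the linear factors, I would first establish, for $n\ge 1$, the polynomial identity
\begin{align*}
\qbinom{m,z}{n}_q = q^n \qbinom{m-1,z}{n}_q + \qbinom{m-1,z}{n-1}_q.
\end{align*}

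This would follow from a short direct computation. The products defining $\qbinom{m,z}{n}_q$ and $\qbinom{m-1,z}{n}_q$ share the $n-1$ factors $f_{m-n+1}(z)\cdots f_{m-1}(z)$, so after factoring these out one is left with $f_m(z)-q^n f_{m-n}(z)$ times that common product, divided by $[n]_q!$. The crucial point is that the $z$-terms cancel, $q^m z - q^n q^{m-n}z = 0$, while the constant parts collapse by $[m]_q - q^n[m-n]_q = \tfrac{1-q^n}{1-q} = [n]_q$; dividing $[n]_q\prod_{k=m-n+1}^{m-1}f_k(z)$ by $[n]_q!$ reproduces exactly $\qbinom{m-1,z}{n-1}_q$. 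This cancellation of the $z$-dependence is the heart of the argument and the one step I expect to require genuine care.

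With the recurrence in hand I would set $G(m,n):=\Phi\big(\qbinom{m,z}{n}_q\big)$ and apply $\Phi$ to both sides, obtaining by linearity
\begin{align*}
G(m,n) = q^n G(m-1,n) + G(m-1,n-1),
\end{align*}
equivalently $G(m-1,n) = q^{-n}\big(G(m,n)-G(m-1,n-1)\big)$. The proof then runs by a double induction: an outer induction on $n$, and for each fixed $n$ a downward induction on $m$ starting from the diagonal value $G(n,n)=(-q^2;q)_n^{-1}$ supplied by \eqref{eq:Phi-n-n}. In the inductive step I would substitute the claimed formula for $G(m,n)$ and, via the outer hypothesis (note $0\le m-1\le n-1$), the value $G(m-1,n-1)=(-1)^{n-m}q^{n-m}(-q^2;q)_{n-1}^{-1}$, and check that the recurrence yields $G(m-1,n)=(-1)^{n-(m-1)}q^{n-(m-1)}(-q^2;q)_n^{-1}$.

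The remaining computation is routine and rests on the single arithmetic identity $(-q^2;q)_n=(1+q^{n+1})(-q^2;q)_{n-1}$, which gives
\begin{align*}
\frac{1}{(-q^2;q)_n}-\frac{1}{(-q^2;q)_{n-1}} = \frac{-q^{n+1}}{(-q^2;q)_n}.
\end{align*}
Feeding this into $q^{-n}\big(G(m,n)-G(m-1,n-1)\big)$ collapses the expression to the asserted closed form. The base cases $n=0$ (where $\qbinom{0,z}{0}_q=1$ and $\Phi(1)=1$) and $m=n$ are immediate, so the induction closes and \eqref{eq:Phi-m-n} follows. The only genuinely delicate point is the recurrence itself; once the $z$-dependence is seen to cancel, everything else is bookkeeping.
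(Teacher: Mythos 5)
Your proof is correct, but it follows a genuinely different route from the paper's. The paper quotes an explicit expansion of $\qbinom{m,z}{n}_q$ in the basis $\{\qbinom{k,z}{k}_q\}_{k\ge 0}$ due to Chapoton and Essouabri, pushes $\Phi$ through that expansion by linearity, and then recognizes the resulting sum as a ${}_2\phi_1$ series that collapses via the $q$-Chu--Vandermonde sum \eqref{eq:q-Chu-Vand} in the $a\to 0$ limit. You instead prove the Pascal-type identity $\qbinom{m,z}{n}_q = q^n \qbinom{m-1,z}{n}_q + \qbinom{m-1,z}{n-1}_q$, which is valid: factoring out the common factors $f_{m-n+1}\cdots f_{m-1}$ reduces it to $f_m(z)-q^nf_{m-n}(z)=[n]_q$, and indeed the $z$-terms cancel while $[m]_q-q^n[m-n]_q=[n]_q$. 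Applying $\Phi$ and running your double induction (outer on $n$, inner downward on $m$ from the diagonal values \eqref{eq:Phi-n-n}) closes correctly, since the telescoping identity $\frac{1}{(-q^2;q)_n}-\frac{1}{(-q^2;q)_{n-1}}=\frac{-q^{n+1}}{(-q^2;q)_n}$ produces exactly the extra factor $-q$ needed at each step; all evaluations of $\Phi$ stay in the range $0\le m\le n$, so the induction is well-founded. What the two approaches buy: yours is entirely elementary and self-contained, requiring neither the cited expansion from the literature nor any $q$-hypergeometric summation, which makes it arguably preferable as a standalone proof of this lemma; the paper's argument is shorter modulo the citation and, more importantly, keeps the computation inside the basic hypergeometric framework (basis expansion plus $q$-Chu--Vandermonde) that is reused repeatedly elsewhere in the paper, e.g.\ in the proofs of Theorem \ref{th:cP-linear-func} and in Sections \ref{sec:HDet} and \ref{sec:con}, so the same machinery pays for itself several times over.
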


\begin{proof}
	It was shown in \cite[p.~19, Lemma 1]{CE2015} that for $0\le m\le n$,
	\begin{align*}
		\qbinom{m,z}{n}_q &= \sum_{k=0}^n (-1)^{n-k}q^{-n(n-m)+\binom{n-k}{2}} \qbinom{n-m}{n-k}_q \qbinom{k,z}{k}_q\\
		&=q^{-n(n-m)} \sum_{k=0}^{n-m} (-1)^{n-m-k} q^{\binom{n-m-k}{2}} \qbinom{n-m}{n-m-k}_q \qbinom{m+k,z}{m+k}_q.
	\end{align*}
	Here the $q$-binomial coefficients are given by
	\begin{align*}
		\qbinom{M}{N}_q:=\begin{cases}
			\scalebox{0.85}{%
				$\dfrac{(q;q)_M}{(q;q)_N(q;q)_{M-N}}$} & \text{if $0\le N\le M$},\\[6pt]
			0 & \text{otherwise}.
		\end{cases}
	\end{align*}
	Recalling that $\Phi$ is linear, $\Phi\left(\qbinom{m,z}{n}_q\right)$ equals
	\begin{align*}
		q^{-n(n-m)} \sum_{k=0}^{n-m} (-1)^{n-m-k} q^{\binom{n-m-k}{2}} \qbinom{n-m}{n-m-k}_q \Phi\left(\qbinom{m+k,z}{m+k}_q\right).
	\end{align*}
	In light of \eqref{eq:Phi-n-n},
	\begin{align*}
		\Phi\left(\qbinom{m,z}{n}_q\right)&= q^{-n(n-m)} \sum_{k=0}^{n-m} (-1)^{n-m-k} q^{\binom{n-m-k}{2}} \qbinom{n-m}{n-m-k}_q \frac{1}{(-q^2;q)_{m+k}}\\
		&=\frac{(-1)^{n-m}q^{\binom{m+1}{2}-\binom{n+1}{2}}}{(-q^2;q)_m} {}_2 \phi_{1}\left(\begin{matrix}
			q^{-(n-m)},0\\
			-q^{m+2}
		\end{matrix};q,q\right)\\
		&=\frac{(-1)^{n-m}q^{\binom{m+1}{2}-\binom{n+1}{2}}}{(-q^2;q)_m} \cdot \frac{q^{(m+2)(n-m)+\binom{n-m}{2}}}{(-q^{m+2};q)_{n-m}},
	\end{align*}
	thereby yielding the desired result. Here for the evaluation of the ${}_2 \phi_{1}$ series, we apply the $q$-Chu--Vandermonde Sum \cite[p.~354, eq.~(II.6)]{GR2004}
	\begin{align}\label{eq:q-Chu-Vand}
		{}_2 \phi_{1}\left(\begin{matrix}
			a,q^{-N}\\
			c
		\end{matrix};q,q\right) = \frac{a^N (c/a;q)_N}{(c;q)_N}
	\end{align}
	at the $a\to 0$ limiting case.
\end{proof}

\begin{lemma}
	For $n\ge 0$,
	\begin{align}\label{eq:Phi-n+1-n}
		\Phi\left(\qbinom{n+1,z}{n}_q\right) = \frac{1+q}{q} - \frac{1}{q(-q^2;q)_n}.
	\end{align}
\end{lemma}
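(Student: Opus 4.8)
The plan is to avoid expanding $\qbinom{n+1,z}{n}_q$ directly in the basis $\{\qbinom{j,z}{j}_q\}_{j\ge 0}$, and instead to exploit a Pascal-type recurrence that lowers both indices at once. First I would establish, purely from the product definition, the identity
\begin{align*}
	\qbinom{m,z}{n}_q = q^n \qbinom{m-1,z}{n}_q + \qbinom{m-1,z}{n-1}_q.
\end{align*}
To verify this, factor the common part $\frac{1}{[n]_q!}\prod_{k=m-n+1}^{m-1}([k]_q+q^kz)$ out of all three terms; the identity then reduces to the scalar relation $[m]_q + q^m z = q^n([m-n]_q + q^{m-n}z) + [n]_q$, which follows by matching the coefficient of $z$ (giving $q^m = q^n\cdot q^{m-n}$) and by using $[m]_q = q^n[m-n]_q + [n]_q$, itself a consequence of $[a+b]_q = [a]_q + q^a[b]_q$ together with $1-q^n = (1-q)[n]_q$.

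Second, specializing to $m = n+1$ and applying the linear functional $\Phi$ yields
\begin{align*}
	\Phi\left(\qbinom{n+1,z}{n}_q\right) = q^n \Phi\left(\qbinom{n,z}{n}_q\right) + \Phi\left(\qbinom{n,z}{n-1}_q\right).
\end{align*}
The first term on the right equals $q^n/(-q^2;q)_n$ by \eqref{eq:Phi-n-n}. The crucial observation is that the second term is precisely the $(n-1)$-instance of the quantity being computed, since $\qbinom{n,z}{n-1}_q = \qbinom{(n-1)+1,z}{n-1}_q$. Writing $f(n) := \Phi\left(\qbinom{n+1,z}{n}_q\right)$, the recurrence becomes $f(n) = f(n-1) + q^n/(-q^2;q)_n$ for $n\ge 1$, with base case $f(0) = \Phi\left(\qbinom{1,z}{0}_q\right) = \Phi(1) = 1$ (the governing product is empty).

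Third, I would solve this first-order recurrence by telescoping. Setting $g(j) := \frac{1}{q\,(-q^2;q)_j}$ and using $(-q^2;q)_j = (1+q^{j+1})(-q^2;q)_{j-1}$, one gets $g(j-1) - g(j) = q^j/(-q^2;q)_j$. Hence $\sum_{j=1}^n q^j/(-q^2;q)_j = g(0) - g(n) = \frac{1}{q} - \frac{1}{q(-q^2;q)_n}$, and therefore
\begin{align*}
	f(n) = 1 + \frac{1}{q} - \frac{1}{q(-q^2;q)_n} = \frac{1+q}{q} - \frac{1}{q(-q^2;q)_n},
\end{align*}
as claimed.

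The main obstacle is recognizing that the previously proved evaluation \eqref{eq:Phi-m-n} does \emph{not} apply here: it requires the top index not to exceed the bottom index ($m\le n$), whereas $\qbinom{n,z}{n-1}_q$ has a strictly larger top index. Rather than hunting for a new closed form for such an overshooting symbol, the Pascal recurrence converts the problem into a self-referential recursion in $n$, which is then immediate. The only genuine computation is the telescoping identity, which hinges on peeling off the single factor $(1+q^{j+1})$ that distinguishes $(-q^2;q)_j$ from $(-q^2;q)_{j-1}$.
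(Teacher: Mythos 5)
Your proposal is correct and is essentially the paper's own proof: your Pascal-type identity, specialized to $m=n+1$, is exactly the relation $\qbinom{n+2,z}{n+1}_q - q^{n+1}\qbinom{n+1,z}{n+1}_q = \qbinom{n+1,z}{n}_q$ that the paper establishes in its inductive step, and both arguments reduce the lemma to the same first-order recurrence $f(n)=f(n-1)+q^n/(-q^2;q)_n$ with $f(0)=1$. Solving that recurrence by telescoping rather than verifying the stated closed form by induction is only a cosmetic difference.
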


\begin{proof}
	We prove by induction on $n$. It is clear that the statement is true for $n=0$ as
	$$\Phi\left(\qbinom{1,z}{0}_q\right)=\Phi(1)=\Phi\left(\qbinom{0,z}{0}_q\right)=1.$$
	Assuming that the statement is true for some $n\ge 0$, we shall show that it is also true for $n+1$. We begin by noticing that
	\begin{align*}
		&\qbinom{n+2,z}{n+1}_q - q^{n+1}\qbinom{n+1,z}{n+1}_q\\
		&= \big(([n+2]_q + q^{n+2}z) - q^{n+1}([1]_q + qz)\big) \cdot \frac{1}{[n+1]_q!} \prod_{k=2}^{n+1} ([k]_q+q^k z)\\
		&=\frac{[n+2]_q - q^{n+1}}{[n+1]_q}\cdot \qbinom{n+1,z}{n}_q\\
		&=\qbinom{n+1,z}{n}_q.
	\end{align*}
	Applying the linear functional $\Phi$ on both sides gives
	\begin{align*}
		\Phi\left(\qbinom{n+2,z}{n+1}_q\right)&= q^{n+1}\cdot \Phi\left(\qbinom{n+1,z}{n+1}_q\right)+ \Phi\left(\qbinom{n+1,z}{n}_q\right)\\
		&=q^{n+1} \cdot \frac{1}{(-q^2;q)_{n+1}} + \left(\frac{1+q}{q} - \frac{1}{q(-q^2;q)_n}\right)\\
		&=\frac{1+q}{q} - \frac{1}{q(-q^2;q)_{n+1}},
	\end{align*}
	as desired. Here we make use of \eqref{eq:Phi-n-n} together with the inductive assumption for the second equality.
\end{proof}

\begin{theorem}\label{th:Phi-relation}
	For any $P(z)\in \mathbb{Q}(q)[z]$,
	\begin{align}
		q \Phi\big(P(1+qz)\big) + \Phi\big(P(z)\big) = (1+q)P(0).
	\end{align}
\end{theorem}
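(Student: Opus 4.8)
The plan is to exploit the linearity of $\Phi$ and reduce the identity to a single computation on a convenient basis. Since both sides of the claimed relation are $\mathbb{Q}(q)$-linear in $P$, it suffices to verify it when $P$ ranges over the basis $\{\qbinom{n,z}{n}_q\}_{n\ge 0}$ of $\mathbb{Q}(q)[z]$. So I would fix $n\ge 0$ and set $P(z) = \qbinom{n,z}{n}_q$, then evaluate the three ingredients $P(0)$, $\Phi\big(P(z)\big)$, and $\Phi\big(P(1+qz)\big)$ separately.

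Two of these are immediate. The value at the origin is $\qbinom{n,0}{n}_q = \frac{1}{[n]_q!}\prod_{k=1}^n [k]_q = 1$, so the right-hand side is simply $1+q$; and $\Phi\big(P(z)\big) = \frac{1}{(-q^2;q)_n}$ is exactly the defining relation \eqref{eq:Phi-n-n}. The crux is to identify $P(1+qz)$ with a basis polynomial that I already know how to evaluate. Substituting $z\mapsto 1+qz$ into each factor $[k]_q + q^k z$ of $P$ and using the elementary identity $[k]_q + q^k = [k+1]_q$, I expect each factor to transform as
\[
[k]_q + q^k(1+qz) = [k+1]_q + q^{k+1}z,
\]
so that after reindexing the product,
\[
P(1+qz) = \frac{1}{[n]_q!}\prod_{k=1}^n\big([k+1]_q + q^{k+1}z\big) = \qbinom{n+1,z}{n}_q.
\]
Its image under $\Phi$ is then supplied directly by \eqref{eq:Phi-n+1-n}, giving $\frac{1+q}{q} - \frac{1}{q(-q^2;q)_n}$.

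Finally I would assemble the left-hand side as $q\big(\tfrac{1+q}{q} - \tfrac{1}{q(-q^2;q)_n}\big) + \tfrac{1}{(-q^2;q)_n}$, in which the two contributions involving $(-q^2;q)_n^{-1}$ cancel, leaving $1+q$ and matching the right-hand side for every $n$; by linearity this establishes the theorem for all $P$. The only genuine obstacle is the index-shift computation identifying $P(1+qz)$ with $\qbinom{n+1,z}{n}_q$ — once the telescoping $[k]_q + q^k = [k+1]_q$ is observed, the rest is bookkeeping through linearity and the two previously established evaluations.
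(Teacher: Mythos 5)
Your proposal is correct and follows essentially the same route as the paper: both rely on the basis $\{\qbinom{n,z}{n}_q\}_{n\ge 0}$, the identity $\qbinom{n,1+qz}{n}_q = \qbinom{n+1,z}{n}_q$ (via $[k]_q + q^k = [k+1]_q$), and the evaluations \eqref{eq:Phi-n-n} and \eqref{eq:Phi-n+1-n}. The only cosmetic difference is that you verify the identity on each basis element and invoke linearity, while the paper expands a general $P(z)=\sum_n c_n \qbinom{n,z}{n}_q$ and computes with the coefficients; these are the same argument.
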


\begin{proof}
	Since $\{\qbinom{n,z}{n}_q\}_{n\ge 0}$ forms a basis of $\mathbb{Q}(q)[z]$, we may write $P(z)$ as
	\begin{align*}
		P(z) = \sum_{n=0}^N c_n \qbinom{n,z}{n}_q,
	\end{align*}
	with $N$ the degree of $P(z)$ and $c_n\in \mathbb{Q}(q)$ for each $0\le n\le N$. Note that for $n\ge 0$,
	\begin{align*}
		\qbinom{n,1+qz}{n}_q = \frac{1}{[n]_q!} \prod_{k=1}^n \big([k]_q+q^k (1+qz)\big)=\qbinom{n+1,z}{n}_q.
	\end{align*}
	Therefore,
	\begin{align*}
		&q \Phi\big(P(1+qz)\big) + \Phi\big(P(z)\big)\\
		&= q \sum_{n=0}^N c_n \Phi\left(\qbinom{n+1,z}{n}_q\right) + \sum_{n=0}^N c_n \Phi\left(\qbinom{n,z}{n}_q\right)\\
		\text{\tiny (by $\substack{\eqref{eq:Phi-n-n}\\ \eqref{eq:Phi-n+1-n}}$)}&= q \sum_{n=0}^N c_n \left(\frac{1+q}{q} - \frac{1}{q(-q^2;q)_n}\right) + \sum_{n=0}^N c_n \left(\frac{1}{(-q^2;q)_n}\right)\\
		&=(1+q) \sum_{n=0}^N c_n.
	\end{align*}
	Finally, we evaluate that
	\begin{align*}
		P(0) = \sum_{n=0}^N c_n \qbinom{n,0}{n}_q = \sum_{n=0}^N c_n \frac{[n]_q!}{[n]_q!} = \sum_{n=0}^N c_n,
	\end{align*}
	thereby concluding the required relation.
\end{proof}

\begin{theorem}
	For $n\ge 0$,
	\begin{align}\label{eq:Phi-zn}
		\Phi(z^n)=\epsilon_n.
	\end{align}
\end{theorem}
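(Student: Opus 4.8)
The plan is to show that $\Phi(z^n)$ satisfies exactly the defining initial condition and recursion of the $q$-Euler numbers, so that the two sequences coincide by induction. The entire argument rests on feeding the monomial $P(z)=z^n$ into the functional identity already established in Theorem~\ref{th:Phi-relation}.

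First I would dispatch the base case. Since $z^0=1=\qbinom{0,z}{0}_q$, the defining value \eqref{eq:Phi-n-n} gives $\Phi(z^0)=1/(-q^2;q)_0=1=\epsilon_0$, matching the initialization $\epsilon_0=1$. For the inductive step, the key computation is to specialize $P(z)=z^n$ in Theorem~\ref{th:Phi-relation}. Here $P(1+qz)=(1+qz)^n=\sum_{k=0}^n\binom{n}{k}q^k z^k$, so linearity of $\Phi$ gives $\Phi\big(P(1+qz)\big)=\sum_{k=0}^n\binom{n}{k}q^k\Phi(z^k)$, while $P(0)=0^n$. Substituting into $q\Phi\big(P(1+qz)\big)+\Phi\big(P(z)\big)=(1+q)P(0)$ yields, for $n\ge 1$ (where $0^n=0$),
\begin{align*}
	\sum_{k=0}^n\binom{n}{k}q^{k+1}\Phi(z^k)+\Phi(z^n)=0,
\end{align*}
which is precisely the Carlitz recursion \eqref{eq:q-Euler-rec} that defines $\epsilon_n$.

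It then remains to conclude by induction. I would note that $\Phi(z^n)$ effectively appears twice in the displayed relation — once as the $k=n$ term (with coefficient $\binom{n}{n}q^{n+1}=q^{n+1}$) and once as the standalone summand — so its total coefficient is $q^{n+1}+1$, which is a nonzero, hence invertible, element of $\mathbb{Q}(q)$. The same is true for the recursion defining $\epsilon_n$. Consequently, given that $\Phi(z^k)=\epsilon_k$ for all $0\le k<n$, the relation determines $\Phi(z^n)$ uniquely, and it must equal the value $\epsilon_n$ produced by the identical recursion. This closes the induction and gives $\Phi(z^n)=\epsilon_n$ for all $n\ge 0$.

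There is no serious obstacle here: the work has already been done in proving Theorem~\ref{th:Phi-relation}, and this statement is essentially a corollary obtained by a single well-chosen specialization. The only points warranting care are verifying that the binomial expansion of $(1+qz)^n$ reproduces the Carlitz recursion term-for-term (including the shift $q^k\mapsto q^{k+1}$ coming from the factor $q$ in front of $\Phi(P(1+qz))$), and observing that the coefficient $q^{n+1}+1$ of the top term is invertible so that the recursion genuinely pins down each successive value.
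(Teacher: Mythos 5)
Your proposal is correct and follows essentially the same route as the paper: specialize $P(z)=z^n$ in Theorem~\ref{th:Phi-relation} to recover the Carlitz recursion \eqref{eq:q-Euler-rec}, check the base case $\Phi(z^0)=1=\epsilon_0$, and conclude by induction. Your extra remark that the combined coefficient $1+q^{n+1}$ of $\Phi(z^n)$ is invertible in $\mathbb{Q}(q)$ merely makes explicit the uniqueness step the paper leaves implicit.
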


\begin{proof}
	We first notice that $\Phi(z^0) = \Phi(1) = 1 = \epsilon_0$. Now for $n\ge 1$, we apply Theorem \ref{th:Phi-relation} with $P(z)=z^n$, and derive that
	\begin{align*}
		q\Phi\big((1+qz)^n\big) + \Phi(z^n) = 0,
	\end{align*}
	namely,
	\begin{align*}
		\sum_{k=0}^n \binom{n}{k} q^{k+1} \Phi(z^k)  + \Phi(z^n) = 0.
	\end{align*}
	Since this recursive relation for $\Phi(z^n)$ is identical to that for $\epsilon_n$ as given in \eqref{eq:q-Euler-rec}, we conclude that $\Phi(z^n)=\epsilon_n$.
\end{proof}

\section{Continued fractions for $\{\epsilon_{k}\}_{k\ge 0}$ and $\{\epsilon_{k+1}\}_{k\ge 0}$}\label{sec:CF}

Let $\ell\in \{0,1\}$. We define two linear functionals $\Phi_\ell$ on $\mathbb{Q}(q)[z]$ by
\begin{align}
	\Phi_\ell(z^n) := \Phi(z^{n+\ell}) \qquad (n\ge 0),
\end{align}
where the linear functional $\Phi$ is as in \eqref{eq:Phi-n-n}.

\begin{theorem}\label{th:cP-linear-func}
	Let $\ell\in \{0,1\}$. The family of monic polynomials $\{\cP_{\ell,n}(z)\}_{n\ge 0}$ given in \eqref{eq:cP-def} is orthogonal under the linear functional $\Phi_\ell$.
\end{theorem}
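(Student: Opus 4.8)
The plan is to verify orthogonality directly from the definition, reducing the whole statement to a single vanishing identity. Since $\{\cP_{\ell,n}(z)\}_{n\ge 0}$ is the monic family generated by the three-term recurrence of Theorem~\ref{th:cP-rec} with $b_{\ell,n}\neq 0$, I would first record the elementary principle that, for such a family, orthogonality with respect to a functional $L$ with $L(1)\neq 0$ is equivalent to the conditions $L(\cP_{\ell,n})=0$ for every $n\ge 1$. Indeed, $\{\cP_{\ell,k}\}_{k\le N}$ is a basis of the polynomials of degree $\le N$, and iterating the recurrence shows that $z^k\cP_{\ell,n}$ expands in this basis with lowest occurring index $n-k$; hence for $k<n$ it has no $\cP_{\ell,0}$-component, so $L(z^k\cP_{\ell,n})=0$ once all $L(\cP_{\ell,m})$ with $m\ge 1$ vanish, while $L(\cP_{\ell,n}^2)=b_{\ell,1}\cdots b_{\ell,n}\,L(1)\neq 0$ supplies the required nonzero norms. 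Applying this with $L=\Phi_\ell$, and noting $\Phi_\ell(1)=\Phi(z^\ell)=\epsilon_\ell\neq 0$ by \eqref{eq:Phi-zn}, the theorem collapses to proving $\Phi_\ell(\cP_{\ell,n})=0$ for all $n\ge 1$.

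To compute $\Phi_\ell(\cP_{\ell,n})$ I would use the hypergeometric form \eqref{eq:cP-def}, in which the polynomial variable enters only through the factor $(t;q)_j$ with $t=q\bigl(1-(1-q)z\bigr)$. The key observation is that $1-q^i t=(1-q)\bigl([i+1]_q+q^{i+1}z\bigr)$, so that $(t;q)_j=(q;q)_j\,\qbinom{j,z}{j}_q$ matches exactly the basis on which $\Phi$ is prescribed. Thus $\Phi_0\bigl((t;q)_j\bigr)$ is immediate from \eqref{eq:Phi-n-n}, while for $\ell=1$ one further needs $\Phi\bigl(z\,\qbinom{j,z}{j}_q\bigr)$, which I would obtain from the two-term relation $z\,\qbinom{j,z}{j}_q=\tfrac{[j+1]_q}{q^{j+1}}\bigl(\qbinom{j+1,z}{j+1}_q-\qbinom{j,z}{j}_q\bigr)$ together with \eqref{eq:Phi-n-n}.

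Substituting these moment evaluations, the sum defining $\Phi_\ell(\cP_{\ell,n})$ telescopes down to a terminating ${}_2\phi_1$: in the case $\ell=0$ one gets ${}_2\phi_1(q^{-n},-q^{n+1};-q^2;q,q)$, and after cancelling the factor $1-q^{j+1}$ the case $\ell=1$ likewise reduces to ${}_2\phi_1(q^{-n},-q^{n+2};-q^3;q,q)$. Both are evaluated by the $q$-Chu--Vandermonde sum \eqref{eq:q-Chu-Vand}, and in each case the closed form carries the factor $(q^{1-n};q)_n$, whose last factor $1-q^0$ vanishes for every $n\ge 1$. Hence $\Phi_\ell(\cP_{\ell,n})=0$ for $n\ge 1$, which by the first paragraph completes the proof.

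The main obstacle I anticipate is the moment computation for $\ell=1$: the extra factor $z$ in $\Phi_1=\Phi(z\,\cdot\,)$ prevents a one-line appeal to \eqref{eq:Phi-n-n} and forces the shift identity for $z\,\qbinom{j,z}{j}_q$, after which a nontrivial cancellation of the $1-q^{j+1}$ factor is needed to return the sum to a pure $q$-Chu--Vandermonde shape. Everything else is routine bookkeeping with $q$-Pochhammer symbols.
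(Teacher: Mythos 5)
Your proposal is correct and follows essentially the same route as the paper: both reduce orthogonality to the single vanishing claim $\Phi_\ell\big(\cP_{\ell,n}(z)\big)=0$ for $n\ge 1$, expand $\cP_{\ell,n}(z)$ in the basis $\qbinom{k,z}{k}_q$ via the factorization $(q(1-(1-q)z);q)_k=(q;q)_k\qbinom{k,z}{k}_q$, and evaluate the resulting terminating ${}_2\phi_1$ by the $q$-Chu--Vandermonde sum \eqref{eq:q-Chu-Vand}, where the factor $(q^{1-n};q)_n$ forces the vanishing. The only cosmetic differences are that you prove the Favard-type reduction directly instead of citing Favard's Theorem plus $\Phi_\ell(\cP_{\ell,0})\ne 0$, and that for $\ell=1$ you obtain the needed moment $\Phi\big(z\qbinom{k,z}{k}_q\big)$ from \eqref{eq:Phi-n-n} via a two-term shift identity, whereas the paper writes $z\qbinom{k,z}{k}_q=[k+1]_q\qbinom{k,z}{k+1}_q$ and invokes \eqref{eq:Phi-m-n} --- both yield the same value and the same ${}_2\phi_1(q^{-n},-q^{n+2};-q^3;q,q)$.
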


\begin{proof}
	The orthogonality of $\{\cP_{\ell,n}(z)\}_{n\ge 0}$ is ensured by Favard's Theorem for $\cP_{\ell,n}(z)$ satisfies a three-term recursive relation as shown in \eqref{eq:rec-P}. Note that $\Phi_0\big(\cP_{0,0}(z)\big)=\Phi_0(z^0)=\Phi(z^0)=\epsilon_0 \ne 0$ and that $\Phi_1\big(\cP_{1,0}(z)\big)=\Phi_1(z^0)=\Phi(z^1)=\epsilon_1 \ne 0$ where \eqref{eq:Phi-zn} is invoked. Hence, it suffices to show that for $\ell\in \{0,1\}$, $\Phi_\ell\big(\cP_{\ell,n}(z)\big)=0$ whenever $n\ge 1$.
	
	When $\ell=0$, we start by observing that for $k\ge 0$,
	\begin{align*}
		(q(1-(1-q)z);q)_k = (q;q)_k \qbinom{k,z}{k}_q.
	\end{align*}
	Thus,
	\begin{align*}
		\cP_{0,n}(z) = \frac{(-1)^n (q;q)_n}{q^n (1-q)^n (-q^{n+1};q)_n} \sum_{k=0}^n \frac{q^k (q^{-n},-q^{n+1};q)_k}{(q;q)_k} \qbinom{k,z}{k}_q.
	\end{align*}
	Since $\Phi_0\big(\cP_{0,n}(z)\big) = \Phi\big(\cP_{0,n}(z)\big)$, it follows that for $n\ge 1$,
	\begin{align*}
		\Phi_0\big(\cP_{0,n}(z)\big) &= \frac{(-1)^n (q;q)_n}{q^n (1-q)^n (-q^{n+1};q)_n} \sum_{k=0}^n \frac{q^k (q^{-n},-q^{n+1};q)_k}{(q;q)_k} \Phi\left(\qbinom{k,z}{k}_q\right) \\
		\text{\tiny (by $\eqref{eq:Phi-n-n}$)}&= \frac{(-1)^n (q;q)_n}{q^n (1-q)^n (-q^{n+1};q)_n} \sum_{k=0}^n \frac{q^k (q^{-n},-q^{n+1};q)_k}{(q,-q^2;q)_k} \\
		\text{\tiny (by $\eqref{eq:q-Chu-Vand}$)}&= \frac{(-1)^n (q;q)_n}{q^n (1-q)^n (-q^{n+1};q)_n} \cdot \frac{(-1)^n q^{n(n+1)} (q^{1-n};q)_n}{(-q^2;q)_n}\\
		&= 0.
	\end{align*}
	When $\ell=1$, we notice that for $k\ge 0$,
	\begin{align*}
		z(q(1-(1-q)z);q)_k = (q^2;q)_k \qbinom{k,z}{k+1}_q.
	\end{align*}
	Thus,
	\begin{align*}
		z\cdot \cP_{1,n}(z) = \frac{(-1)^n (q^{2};q)_n}{q^n (1-q)^n (-q^{n+2};q)_n} \sum_{k=0}^n \frac{q^k (q^{-n},-q^{n+2};q)_k}{(q;q)_k}\qbinom{k,z}{k+1}_q.
	\end{align*}
	Since $\Phi_1\big(\cP_{1,n}(z)\big) = \Phi\big(z\cdot \cP_{1,n}(z)\big)$, it follows that for $n\ge 1$,
	\begin{align*}
		\Phi_1\big(\cP_{1,n}(z)\big) &= \frac{(-1)^n (q^{2};q)_n}{q^n (1-q)^n (-q^{n+2};q)_n} \sum_{k=0}^n \frac{q^k (q^{-n},-q^{n+2};q)_k}{(q;q)_k} \Phi\left(\qbinom{k,z}{k+1}_q\right) \\
		\text{\tiny (by $\eqref{eq:Phi-m-n}$)}&= -\frac{(-1)^n (q^{2};q)_n}{q^n (1-q)^n (-q^{n+2};q)_n}\cdot \frac{q}{1+q^2} \sum_{k=0}^n \frac{q^k (q^{-n},-q^{n+2};q)_k}{(q,-q^3;q)_k} \\
		\text{\tiny (by $\eqref{eq:q-Chu-Vand}$)}&= -\frac{(-1)^n (q^{2};q)_n}{q^n (1-q)^n (-q^{n+2};q)_n}\cdot \frac{q}{1+q^2} \cdot \frac{(-1)^n q^{n(n+2)} (q^{1-n};q)_n}{(-q^3;q)_n}\\
		&= 0.
	\end{align*}
	The required claim therefore holds.
\end{proof}

Now we are ready to state the $J$-fraction expressions for the series generated by $\{\epsilon_{k}\}_{k\ge 0}$ and $\{\epsilon_{k+1}\}_{k\ge 0}$.

\begin{theorem}\label{th:qEuler-0-1-CF}
	Let $\ell\in \{0,1\}$. We have
	\begin{align}
		\sum_{k\ge 0}\epsilon_{k+\ell} x^k = \cfrac{\epsilon_\ell}{1 + a_{\ell,0} x -\cfrac{b_{\ell,1} x^2}{1 + a_{\ell,1} x - \cfrac{b_{\ell,2} x^2}{1 + a_{\ell,2} x - \lastddots}}}.
	\end{align}
	Further, the Favard orthogonal polynomials for $\{\epsilon_{k+\ell}\}_{k\ge 0}$ are $\cP_{\ell,n}(z)$. Here, $a_{\ell,n}$, $b_{\ell,n}$ and $\cP_{\ell,n}(z)$ are as in Theorem \ref{th:cP-rec}.
\end{theorem}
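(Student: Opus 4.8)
The plan is to recognize that this statement is a clean synthesis of the structural results already assembled, with Lemma \ref{le:linear-func} serving as the bridge between orthogonal polynomials and $J$-fractions. The key observation is that the coefficient sequence on the left-hand side is exactly what the functional $\Phi_\ell$ produces: by the definition of $\Phi_\ell$ together with the identity $\Phi(z^n)=\epsilon_n$ from \eqref{eq:Phi-zn}, one has $\Phi_\ell(z^k) = \Phi(z^{k+\ell}) = \epsilon_{k+\ell}$ for every $k\ge 0$, so that
\begin{align*}
	\sum_{k\ge 0} \Phi_\ell(z^k)\, x^k = \sum_{k\ge 0} \epsilon_{k+\ell}\, x^k,
\end{align*}
and in particular the constant $\Phi_\ell(z^0) = \Phi(z^\ell) = \epsilon_\ell$ supplies the numerator.

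First I would check that the hypotheses of Lemma \ref{le:linear-func} are met for $L = \Phi_\ell$ and $p_n = \cP_{\ell,n}$. By construction in \eqref{eq:cP-def} each $\cP_{\ell,n}(z)$ is monic of degree $n$, and Theorem \ref{th:cP-linear-func} guarantees that the family $\{\cP_{\ell,n}(z)\}_{n\ge 0}$ is orthogonal under $\Phi_\ell$. The three-term recursion of Theorem \ref{th:cP-rec}, with its explicit coefficients $a_{\ell,n}$ and $b_{\ell,n}$, is precisely the recursion furnished by Favard's Theorem; here one notes that $b_{\ell,n}\ne 0$ as a rational function in $q$ for $n\ge 1$, since its numerator carries the nonvanishing factor $(1-q^{2n})$. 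With these in place, Lemma \ref{le:linear-func} applies verbatim and yields
\begin{align*}
	\sum_{k\ge 0} \Phi_\ell(z^k)\, x^k = \cfrac{\Phi_\ell(z^0)}{1 + a_{\ell,0} x - \cfrac{b_{\ell,1} x^2}{1 + a_{\ell,1} x - \lastddots}}.
\end{align*}
Substituting $\Phi_\ell(z^k)=\epsilon_{k+\ell}$ and $\Phi_\ell(z^0)=\epsilon_\ell$ then gives the claimed expansion.

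For the assertion that the Favard orthogonal polynomials for $\{\epsilon_{k+\ell}\}_{k\ge 0}$ are exactly $\cP_{\ell,n}(z)$, I would appeal to the definition fixed in the remark following Lemma \ref{le:det-cf-op}: the Favard orthogonal polynomials of a sequence are those generated by the three-term recursion whose coefficients $a_n$, $b_n$ are read off from its $J$-fraction. Since the $J$-fraction just derived carries exactly the coefficients $a_{\ell,n}$ and $b_{\ell,n}$ of Theorem \ref{th:cP-rec}, and $\cP_{\ell,n}(z)$ satisfies that very recursion \eqref{eq:rec-P} with the matching initial data $\cP_{\ell,0}(z)=1$ and $\cP_{\ell,1}(z)=a_{\ell,0}+z$, the two families coincide.

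I do not anticipate a genuine obstacle here, as the statement is an assembly of Theorems \ref{th:cP-rec} and \ref{th:cP-linear-func} through Lemma \ref{le:linear-func}. The only points demanding a moment's care are the bookkeeping ones: confirming that $\Phi_\ell(z^0)$ reduces to $\epsilon_\ell$ rather than $\epsilon_0$ when $\ell=1$, and that the nonvanishing of each $b_{\ell,n}$ is secured so that both Favard's framework and Lemma \ref{le:linear-func} are legitimately in force.
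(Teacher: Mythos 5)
Your proof is correct and follows essentially the same route as the paper's: identify $\epsilon_{k+\ell}=\Phi_\ell(z^k)$ via \eqref{eq:Phi-zn}, then combine Theorems \ref{th:cP-rec} and \ref{th:cP-linear-func} with Lemma \ref{le:linear-func} to read off the $J$-fraction and the Favard polynomials. Your extra check that $b_{\ell,n}\ne 0$ (via the factor $1-q^{2n}$) is a careful touch the paper leaves implicit, but the argument is the same.
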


\begin{proof}
	From \eqref{eq:Phi-zn}, we know that when $\ell\in \{0,1\}$, $\epsilon_{k+\ell} = \Phi_\ell(z^k)$ for all $k\ge 0$. In view of Theorems \ref{th:cP-rec} and \ref{th:cP-linear-func}, we apply Lemma \ref{le:linear-func} and obtain
	\begin{align*}
		\sum_{k\ge 0}\epsilon_{k+\ell} x^k = \sum_{k\ge 0}\Phi_\ell(z^k) x^k = \cfrac{\Phi_\ell(z^0)}{1 + a_{\ell,0} x -\cfrac{b_{\ell,1} x^2}{1 + a_{\ell,1} x - \cfrac{b_{\ell,2} x^2}{1 + a_{\ell,2} x - \lastddots}}}.
	\end{align*}
	Here note also that $\Phi_0(z^0)=\epsilon_0$ and $\Phi_1(z^0)=\epsilon_1$.
\end{proof}

\section{Hankel determinant evaluations}\label{sec:HDet}

In this section, we complete the proof of Theorem \ref{th:q-Euler-HDet}. For \eqref{eq:Hankel:+0} and \eqref{eq:Hankel:+1}, we directly apply \eqref{eq:det-general-0} with Theorem \ref{th:qEuler-0-1-CF} in mind. For \eqref{eq:Hankel:+2}, we make use of \eqref{eq:det-general-1} so that
\begin{align*}
	\underset{{0\le i,j\le n}}{\det}(\epsilon_{i+j+2}) = \underset{{0\le i,j\le n}}{\det}(\epsilon_{i+j+1})\cdot (-1)^{n+1} \cP_{1,n+1}(0).
\end{align*}
Note that by \eqref{eq:cP-def},
\begin{align*}
	\cP_{1,n}(0)&=\frac{(-1)^n (q^2;q)_n}{q^n (1-q)^n (-q^{n+2};q)_n} {}_3 \phi_{2}\left(\begin{matrix}
		q^{-n},-q^{n+2},q\\
		q^2,0
	\end{matrix};q,q\right)\\
	&=\frac{(-1)^n (q^2;q)_n}{q^n (1-q)^n (-q^{n+2};q)_n} \sum_{k\ge 0}\frac{(q^{-n},-q^{n+2};q)_k q^{k}}{(q^2;q)_k}\\
	&=\frac{(-1)^{n+1} (q;q)_{n}}{(1-q)^n (-q^{n+1};q)_{n+1}} \sum_{k\ge 1}\frac{(q^{-n-1},-q^{n+1};q)_k q^{k}}{(q;q)_k}\\
	\text{\tiny (by $\eqref{eq:q-Chu-Vand}$)}&=\frac{(-1)^{n+1} (q;q)_{n}}{(1-q)^n (-q^{n+1};q)_{n+1}} \left(-1+(-1)^{n+1}q^{(n+1)^2}\right).
\end{align*}
Finally, invoking \eqref{eq:Hankel:+1} yields the desired result after simplification.

\section{Conclusion}\label{sec:con}

Noting that Theorem \ref{th:cP-linear-func} only covers the cases of $\ell=0$ or $1$, it is naturally asked if one could go beyond. Recall that for all $\ell\ge 0$, the orthogonality of $\{\cP_{\ell,n}(z)\}_{n\ge 0}$ is ensured by Favard's Theorem in light of \eqref{eq:rec-P}. Now let us reformulate $\cP_{\ell,n}(z)$ as
\begin{align*}
	\cP_{\ell,n}(z)= \frac{(-1)^n (q^{\ell+1};q)_n}{q^n (1-q)^n (-q^{n+\ell+1};q)_n} \sum_{k=0}^n \frac{q^k (q^{-n},-q^{n+\ell+1};q)_k}{(q^{\ell+1};q)_k} \qbinom{k,z}{k}_q.
\end{align*}
For each $\ell\ge 0$, define a linear functional $\Theta_\ell$ on $\mathbb{Q}(q)[z]$ by
\begin{align}\label{eq:Upsilon-n-n}
	\Theta_\ell\left(\qbinom{n,z}{n}_q\right) := \frac{(q^{\ell+1};q)_n}{(q,-q^{\ell+2};q)_n} \qquad (n\ge 0).
\end{align}
Then $\Theta_\ell\big(\cP_{\ell,0}(z)\big) = 1$ and for $n\ge 1$,
\begin{align*}
	\Theta_\ell\big(\cP_{\ell,n}(z)\big) &= \frac{(-1)^n (q^{\ell+1};q)_n}{q^n (1-q)^n (-q^{n+\ell+1};q)_n} \sum_{k=0}^n \frac{q^k (q^{-n},-q^{n+\ell+1};q)_k}{(q^{\ell+1};q)_k} \Theta_\ell\left(\qbinom{k,z}{k}_q\right)\\
	&= \frac{(-1)^n (q^{\ell+1};q)_n}{q^n (1-q)^n (-q^{n+\ell+1};q)_n} {}_2 \phi_{1}\left(\begin{matrix}
		q^{-n},-q^{n+\ell+1}\\
		-q^{\ell+2}
	\end{matrix};q,q\right)\\
	\text{\tiny (by $\eqref{eq:q-Chu-Vand}$)}&= \frac{(-1)^n (q^{\ell+1};q)_n}{q^n (1-q)^n (-q^{n+\ell+1};q)_n} \cdot \frac{(-1)^n q^{n(n+\ell+1)} (q^{1-n};q)_n}{(-q^{\ell+2};q)_n}\\
	&= 0.
\end{align*}
Hence, Theorem \ref{th:cP-linear-func} can be extended as follows.

\begin{theorem}
	For each nonnegative integer $\ell$, the family of monic polynomials $\{\cP_{\ell,n}(z)\}_{n\ge 0}$ given in \eqref{eq:cP-def} is orthogonal under the linear functional $\Theta_\ell$.
\end{theorem}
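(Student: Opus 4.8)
The plan is to repeat, for an arbitrary nonnegative integer $\ell$, the argument used to prove Theorem \ref{th:cP-linear-func}. By Theorem \ref{th:cP-rec} the monic polynomials $\cP_{\ell,n}(z)$ satisfy the three-term recurrence \eqref{eq:rec-P}, and its coefficient $b_{\ell,n}$ is nonzero for every $n\ge 1$ because its numerator carries the factor $(1-q^{2n})$, a nonzero element of $\mathbb{Q}(q)$. Hence Favard's Theorem (Lemma \ref{le:Favard}) already guarantees orthogonality with respect to \emph{some} linear functional. As in the proof of Theorem \ref{th:cP-linear-func}, it then suffices to verify the two conditions $\Theta_\ell(\cP_{\ell,0})\ne 0$ and $\Theta_\ell(\cP_{\ell,n})=0$ for all $n\ge 1$: the recurrence propagates these into $\Theta_\ell\big(\cP_{\ell,m}(z)\cP_{\ell,n}(z)\big)=0$ for $m<n$ by a routine induction on the smaller index, while $\Theta_\ell\big(\cP_{\ell,n}(z)^2\big)=\Theta_\ell(1)\,b_{\ell,1}\cdots b_{\ell,n}\ne 0$.

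The condition $\Theta_\ell(\cP_{\ell,0})\ne 0$ is immediate, since $\cP_{\ell,0}(z)=1=\qbinom{0,z}{0}_q$ and \eqref{eq:Upsilon-n-n} gives $\Theta_\ell(1)=1$. For the vanishing when $n\ge 1$, I would first re-expand $\cP_{\ell,n}(z)$ in the basis $\{\qbinom{k,z}{k}_q\}_{k\ge 0}$. Expanding the ${}_3\phi_2$ in \eqref{eq:cP-def} and using the identity $\big(q(1-(1-q)z);q\big)_k=(q;q)_k\qbinom{k,z}{k}_q$ — which follows from $[i]_q+q^iz=\frac{1-q^i+q^i(1-q)z}{1-q}$ together with $(q;q)_k/[k]_q!=(1-q)^k$ — rewrites each hypergeometric term as a multiple of $\qbinom{k,z}{k}_q$, so that
\begin{align*}
	\cP_{\ell,n}(z)=\frac{(-1)^n (q^{\ell+1};q)_n}{q^n (1-q)^n (-q^{n+\ell+1};q)_n}\sum_{k=0}^n \frac{q^k (q^{-n},-q^{n+\ell+1};q)_k}{(q^{\ell+1};q)_k}\qbinom{k,z}{k}_q.
\end{align*}

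Applying $\Theta_\ell$ termwise and substituting \eqref{eq:Upsilon-n-n}, the factor $(q^{\ell+1};q)_k$ cancels and the sum collapses to a ${}_2\phi_1$ with lower parameter $-q^{\ell+2}$, namely
\begin{align*}
	\Theta_\ell\big(\cP_{\ell,n}(z)\big)=\frac{(-1)^n (q^{\ell+1};q)_n}{q^n (1-q)^n (-q^{n+\ell+1};q)_n}\,{}_2\phi_1\!\left(\begin{matrix}q^{-n},-q^{n+\ell+1}\\ -q^{\ell+2}\end{matrix};q,q\right).
\end{align*}
The $q$-Chu--Vandermonde sum \eqref{eq:q-Chu-Vand}, taken with $a=-q^{n+\ell+1}$, $c=-q^{\ell+2}$ and $N=n$, evaluates this ${}_2\phi_1$ to a multiple of $(q^{1-n};q)_n$, whose factor indexed by $j=n-1$ equals $1-q^{0}=0$; thus $\Theta_\ell(\cP_{\ell,n})=0$ for all $n\ge 1$, which finishes the argument.

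The hypergeometric manipulation here is essentially identical to the $\ell\in\{0,1\}$ computation carried out for Theorem \ref{th:cP-linear-func}, so I expect it to present no real difficulty. The one step warranting genuine care is the reduction in the first paragraph: one must confirm that annihilation of every $\cP_{\ell,n}$ with $n\ge 1$, together with $\Theta_\ell(1)\ne 0$, truly forces full orthogonality. This is exactly where the nonvanishing $b_{\ell,n}\ne 0$ is needed — both so that Favard's Theorem applies and so that the diagonal values $\Theta_\ell\big(\cP_{\ell,n}(z)^2\big)=\Theta_\ell(1)\,b_{\ell,1}\cdots b_{\ell,n}$ remain nonzero — and it is the only point not already dispatched verbatim by the earlier case.
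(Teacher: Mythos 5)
Your proof is correct and follows essentially the same route as the paper: expand $\cP_{\ell,n}(z)$ in the basis $\{\qbinom{k,z}{k}_q\}_{k\ge 0}$ via $\big(q(1-(1-q)z);q\big)_k=(q;q)_k\qbinom{k,z}{k}_q$, apply $\Theta_\ell$ termwise so that the $(q^{\ell+1};q)_k$ factors cancel, and evaluate the resulting ${}_2\phi_1$ by the $q$-Chu--Vandermonde sum \eqref{eq:q-Chu-Vand}, which vanishes through the factor $(q^{1-n};q)_n$. The only difference is that you spell out the standard Favard-type reduction (annihilation of $\cP_{\ell,n}$ for $n\ge 1$ together with $\Theta_\ell(1)\ne 0$ and $b_{\ell,n}\ne 0$ forces full orthogonality), which the paper leaves implicit, exactly as in its proof of Theorem \ref{th:cP-linear-func}.
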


\begin{remark*}
	Given a family of orthogonal polynomials $\{p_n(z)\}_{n\ge 0}$ in $\mathbb{K}[z]$ and two associated linear functionals $L_1$ and $L_2$, it is clear by choosing $\{p_n(z)\}_{n\ge 0}$ as a basis of $\mathbb{K}[z]$ that there is a nonzero constant $C\in \mathbb{K}$ such that $L_1= C\cdot L_2$. In our case, we have
	\begin{align*}
		\Phi_0 = \epsilon_0\cdot \Theta_0 \qquad\text{and}\qquad \Phi_1 = \epsilon_1\cdot \Theta_1.
	\end{align*}
	On the other hand, it is a direct consequence of \eqref{eq:det-general-0} and \eqref{eq:rec-P} that for each nonnegative integer $\ell$,
	\begin{align}
		&\underset{{0\le i,j\le n}}{\det}\big(\Theta_\ell(z^{i+j})\big)\notag\\
		&\quad= \frac{(-1)^{\binom{n+1}{2}}q^{2\binom{n+2}{3}+(2\ell-1)\binom{n+1}{2}}}{(1-q)^{n(n+1)}} \prod_{k=1}^n \frac{(q^2,q^{2\ell+2};q^2)_k}{(-q^{\ell+1},-q^{\ell+2},-q^{\ell+2},-q^{\ell+3};q^2)_k}.
	\end{align}
	This identity reduces to \eqref{eq:Hankel:+0} and \eqref{eq:Hankel:+1} for
	\begin{align*}
		\underset{{0\le i,j\le n}}{\det}\big(\Phi_0(z^{i+j})\big) &= \epsilon_0^{n+1}\cdot \underset{{0\le i,j\le n}}{\det}\big(\Theta_0(z^{i+j})\big),\\
		\underset{{0\le i,j\le n}}{\det}\big(\Phi_1(z^{i+j})\big) &= \epsilon_1^{n+1}\cdot \underset{{0\le i,j\le n}}{\det}\big(\Theta_1(z^{i+j})\big).
	\end{align*}
	However, when $\ell\ge 2$, the linear functionals $\Theta_\ell$ and $\Phi$ are no longer intertwined, thereby resulting in a gap between $\Theta_\ell(z^n)$ and the $q$-Euler numbers for higher cases of $\ell$. In fact, it remains uncertain if there is a closed expression of $\Theta_\ell(z^n)$ for $\ell\ge 2$.
\end{remark*}

Interestingly, if we directly look at the normalized big $q$-Jacobi polynomials $\widetilde{\cJ}_{\ell,n}(z)$, it is possible to deduce an infinite family of nice Hankel determinant evaluations.

\begin{theorem}
	For each nonnegative integer $\ell$, define a sequence $\{\xi_{\ell,n}\}_{n\ge 0}$ by
	\begin{align*}
		\xi_{\ell,n}:=\frac{q^{(\ell+1)n} (-q;q)_n}{(-q^{\ell+2};q)_n}.
	\end{align*}
	Then
	\begin{align}
		&\underset{{0\le i,j\le n}}{\det}(\xi_{\ell,i+j})\notag\\
		& \quad= (-1)^{\binom{n+1}{2}}q^{2\binom{n+2}{3}+(2\ell+1)\binom{n+1}{2}} \prod_{k=1}^n \frac{(q^2,q^{2\ell+2};q^2)_k}{(-q^{\ell+1},-q^{\ell+2},-q^{\ell+2},-q^{\ell+3};q^2)_k}.
	\end{align}
\end{theorem}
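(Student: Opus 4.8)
The plan is to exhibit $\{\xi_{\ell,n}\}_{n\ge0}$ as the moment sequence attached to the \emph{untransformed} monic big $q$-Jacobi polynomials $\widetilde{\cJ}_{\ell,n}(z)$ from \eqref{eq:tilde-cJ-def}, and then to read the determinant off from Heilermann's formula \eqref{eq:det-general-0}. Define a linear functional $L_\ell$ on $\mathbb{Q}(q)[z]$ by $L_\ell(z^n):=\xi_{\ell,n}$ for $n\ge0$. Once I show that $\{\widetilde{\cJ}_{\ell,n}(z)\}_{n\ge0}$ is orthogonal under $L_\ell$, the three-term recurrence \eqref{eq:rec-tJ} and Lemma \ref{le:linear-func} give the $J$-fraction expansion of $\sum_{k\ge0}\xi_{\ell,k}x^k$ whose coefficients are the $\widetilde{a}_{\ell,n}$ and $\widetilde{b}_{\ell,n}$ of \eqref{eq:rec-tJ}, and Lemma \ref{le:det-cf-op} then yields immediately
\[
\underset{0\le i,j\le n}{\det}(\xi_{\ell,i+j}) = \xi_{\ell,0}^{\,n+1}\prod_{k=1}^{n}\widetilde{b}_{\ell,k}^{\,n-k+1} = \prod_{k=1}^{n}\widetilde{b}_{\ell,k}^{\,n-k+1},
\]
since $\xi_{\ell,0}=1$.

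To prove the orthogonality I would follow the template of Theorem \ref{th:cP-linear-func} and of the $\Theta_\ell$ computation in Section \ref{sec:con}. Writing out the defining ${}_3\phi_2$ of $\widetilde{\cJ}_{\ell,n}(z)$ in the basis $\{(z;q)_j\}_{j\ge0}$ gives
\[
\widetilde{\cJ}_{\ell,n}(z) = \frac{(q^{\ell+1};q)_n}{(-q^{n+\ell+1};q)_n}\sum_{j=0}^{n}\frac{(q^{-n},-q^{n+\ell+1};q)_j}{(q,q^{\ell+1};q)_j}\,q^{j}\,(z;q)_j,
\]
so the only new ingredient is the value of $L_\ell$ on this basis. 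Expanding $(z;q)_j$ by the $q$-binomial theorem and applying $L_\ell$ termwise produces a ${}_2\phi_1\!\big(q^{-j},-q;-q^{\ell+2};q,q^{\,j+\ell+1}\big)$, which collapses by the $q$-Vandermonde sum \cite[eq.~(II.7)]{GR2004} to
\[
L_\ell\big((z;q)_j\big) = \frac{(q^{\ell+1};q)_j}{(-q^{\ell+2};q)_j}.
\]
Substituting this back cancels the factor $(q^{\ell+1};q)_j$, and $L_\ell(\widetilde{\cJ}_{\ell,n})$ turns into a scalar multiple of ${}_2\phi_1\!\big(q^{-n},-q^{n+\ell+1};-q^{\ell+2};q,q\big)$ --- precisely the series already evaluated in Section \ref{sec:con}, equal to $(-1)^n q^{\,n(n+\ell+1)}(q^{1-n};q)_n/(-q^{\ell+2};q)_n$ by \eqref{eq:q-Chu-Vand}, which vanishes for $n\ge1$ thanks to the factor $(q^{1-n};q)_n$. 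As $L_\ell(\widetilde{\cJ}_{\ell,0})=\xi_{\ell,0}=1\ne0$, Favard's Theorem (Lemma \ref{le:Favard}) confirms that $\{\widetilde{\cJ}_{\ell,n}\}$ is orthogonal under $L_\ell$.

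Finally I would reduce $\prod_{k=1}^{n}\widetilde{b}_{\ell,k}^{\,n-k+1}$ to the stated closed form using the explicit $\widetilde{b}_{\ell,n}$ of \eqref{eq:rec-tJ}. The sign is $(-1)^{\sum_{k=1}^{n}(n-k+1)}=(-1)^{\binom{n+1}{2}}$; the power of $q$ is $\sum_{k=1}^{n}(2k+2\ell+1)(n-k+1)=2\binom{n+2}{3}+(2\ell+1)\binom{n+1}{2}$; and the rational part is handled by the reindexing $\prod_{k=1}^{n}c_k^{\,n-k+1}=\prod_{m=1}^{n}\prod_{k=1}^{m}c_k$, which converts the linear factors of $\widetilde{b}_{\ell,k}$ into the $q^2$-Pochhammer symbols $(q^2;q^2)_m$ and $(q^{2\ell+2};q^2)_m$ in the numerator and $(-q^{\ell+1};q^2)_m$, $(-q^{\ell+2};q^2)_m^2$, $(-q^{\ell+3};q^2)_m$ in the denominator, giving exactly the claimed product.

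The step I expect to be the main obstacle is the moment evaluation $L_\ell((z;q)_j)=(q^{\ell+1};q)_j/(-q^{\ell+2};q)_j$: it is the one genuinely new $q$-series identity and, unlike the evaluations elsewhere in the paper, it needs the argument-$cq^N/a$ form of $q$-Vandermonde rather than the argument-$q$ form \eqref{eq:q-Chu-Vand}. Everything after it either repeats the $\Theta_\ell$ computation of Section \ref{sec:con} or is routine telescoping. As an independent check on the final simplification, one may instead exploit the affine relation $\cP_{\ell,n}(z)=\frac{(-1)^n}{q^n (1-q)^n}\widetilde{\cJ}_{\ell,n}\big((q^2-q)z+q\big)$: by the transformation rule \eqref{eq:rec-r-trans} the recurrence coefficients obey $b_{\ell,n}=(q^2-q)^{-2}\widetilde{b}_{\ell,n}$, whence $\prod_{k=1}^{n}\widetilde{b}_{\ell,k}^{\,n-k+1}=(q^2-q)^{\,n(n+1)}\prod_{k=1}^{n}b_{\ell,k}^{\,n-k+1}$ equals $(q^2-q)^{\,n(n+1)}$ times the evaluation of $\det(\Theta_\ell(z^{i+j}))$ recorded in Section \ref{sec:con}; since $(q^2-q)^{\,n(n+1)}=q^{2\binom{n+1}{2}}(1-q)^{\,n(n+1)}$, this reproduces the claimed formula.
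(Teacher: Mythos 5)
Your proposal is correct and follows essentially the same route as the paper's own proof: defining the functional by $L_\ell(z^n)=\xi_{\ell,n}$, evaluating it on the basis $\{(z;q)_j\}$ via the reverse $q$-Chu--Vandermonde sum \cite[eq.~(II.7)]{GR2004}, deducing orthogonality of $\widetilde{\cJ}_{\ell,n}$ so that Lemmas \ref{le:linear-func} and \ref{le:det-cf-op} apply with the coefficients $\widetilde{b}_{\ell,n}$ of \eqref{eq:rec-tJ}. Your explicit simplification of $\prod_{k=1}^{n}\widetilde{b}_{\ell,k}^{\,n-k+1}$ and the cross-check against $\det(\Theta_\ell(z^{i+j}))$ via $\widetilde{b}_{\ell,n}=(q^2-q)^2 b_{\ell,n}$ are details the paper leaves implicit, and both check out.
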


\begin{proof}
	For each $\ell\ge 0$, we introduce a linear function $\Xi_\ell$ on $\mathbb{Q}(q)[z]$ by
	\begin{align}\label{eq:U-def}
		\Xi_\ell(z^n):=\xi_{\ell,n} \qquad (n\ge 0).
	\end{align}
	It then holds that for $n\ge 0$,
	\begin{align}
		\Xi_\ell\big((z;q)_n\big) = \frac{(q^{\ell+1};q)_n}{(-q^{\ell+2};q)_n}.
	\end{align}
	This is because the $q$-binomial theorem \cite[p.~36, eq.~(3.3.6)]{And1976} tells us that
	\begin{align*}
		(z;q)_n = \sum_{k=0}^n (-z)^k q^{\binom{k}{2}} \qbinom{n}{k}_q,
	\end{align*}
	so that
	\begin{align*}
		\Xi_\ell\big((z;q)_n\big) &= \sum_{k=0}^n (-1)^k q^{\binom{k}{2}} \qbinom{n}{k}_q \cdot \Xi_\ell(z^k)\\
		\text{\tiny (by $\eqref{eq:U-def}$)}&= \sum_{k=0}^n (-1)^k q^{\binom{k}{2}} \cdot \frac{(-1)^k q^{nk-\binom{k}{2}} (q^{-n};q)_k}{(q;q)_k} \cdot \frac{q^{(\ell+1)k} (-q;q)_k}{(-q^{\ell+2};q)_k}\\
		&= {}_2 \phi_{1}\left(\begin{matrix}
			q^{-n},-q\\
			-q^{\ell+2}
		\end{matrix};q,q^{n+\ell+1}\right)\\
		&= \frac{(q^{\ell+1};q)_n}{(-q^{\ell+2};q)_n},
	\end{align*}
	as claimed. Here we make use of the reverse $q$-Chu--Vandermonde Sum \cite[p.~354, eq.~(II.7)]{GR2004} in the last equality.
	
	Now we consider the orthogonal polynomials $\widetilde{\cJ}_{\ell,n}(z)$ as defined in \eqref{eq:tilde-cJ-def}. To show that $\Xi_\ell$ is their associated linear functional, we note that $\Xi_\ell\big(\widetilde{\cJ}_{\ell,0}(z)\big) = \Xi_\ell(1)=1$ and need to verify that $\Xi_\ell\big(\widetilde{\cJ}_{\ell,n}(z)\big) = 0$ whenever $n\ge 1$. In fact,
	\begin{align*}
		\Xi_\ell\big(\widetilde{\cJ}_{\ell,n}(z)\big) &= \frac{(q^{\ell+1};q)_n}{(-q^{n+\ell+1};q)_n}\sum_{k=0}^n \frac{q^k(q^{-n},-q^{n+\ell+1};q)_k}{(q,q^{\ell+1};q)_k} \cdot \Xi_\ell\big((z;q)_k\big)\\
		&=\frac{(q^{\ell+1};q)_n}{(-q^{n+\ell+1};q)_n} {}_2 \phi_{1}\left(\begin{matrix}
			q^{-n},-q^{n+\ell+1}\\
			-q^{\ell+2}
		\end{matrix};q,q\right)\\
		&= 0.
	\end{align*}
	
	Finally, in light of the recursive relation for $\widetilde{\cJ}_{\ell,n}(z)$ given in \eqref{eq:rec-tJ} with the same $\widetilde{a}_{\ell,n}$ and $\widetilde{b}_{\ell,n}$ therein, we have
	\begin{align}
		\sum_{k\ge 0}\xi_{\ell,k} x^k = \cfrac{1}{1 + \widetilde{a}_{\ell,0} x -\cfrac{\widetilde{b}_{\ell,1} x^2}{1 + \widetilde{a}_{\ell,1} x - \cfrac{\widetilde{b}_{\ell,2} x^2}{1 + \widetilde{a}_{\ell,2} x - \lastddots}}}.
	\end{align}
	Applying \eqref{eq:det-general-0} yields the desired Hankel determinant evaluations.
\end{proof}

\subsection*{Acknowledgements}

S.~Chern was supported by a Killam Postdoctoral Fellowship from the Killam Trusts.

\bibliographystyle{amsplain}

\end{document}